\def\Box{\vcenter{\vbox{\hrule\hbox{\vrule
				\vbox to 8.8pt{\hbox to 10pt{}\vfill}\vrule}\hrule}}}
\newtheorem{thm}{Theorem}[section]
\newtheorem{lemma}[thm]{Lemma}
\newtheorem{corollary}[thm]{Corollary}
\newtheorem{example}[thm]{Example}
\numberwithin{equation}{section}
\newtheorem{remark}[thm]{Remark}
\newtheorem{hypothesis}[thm]{Hypothesis}
\def\a{\alpha}
\begin{document}
	\newcommand{\stopthm}{\begin{flushright}
			\(\box \;\;\;\;\;\;\;\;\;\; \)
	\end{flushright}}
	\newcommand{\symfont}{\fam \mathfam}
	
	\title{Flag-transitive point-primitive quasi-symmetric $2$-designs and exceptional groups of Lie type}
	
	\date{}
	
	\author[add1]{Jianbing Lu}\ead{jianbinglu@nudt.edu.cn}
	\address[add1]{Department of Mathematics, National University of Defense Technology, Changsha 410073, China}

	\begin{abstract}
		Let $\mathcal{D}$ be a non-trivial quasi-symmetric $2$-design with two block intersection numbers $x=0$ and $2\leq y\leq10$, and suppose that $G$ is an automorphism group of $\mathcal{D}$. If $G$  is flag-transitive and point-primitive, then it is known that $G$ is either of  affine type or almost simple type. In this paper, we show that the socle of $G$ cannot be a finite simple exceptional group of Lie type. 
		\newline
		
		\noindent\text{Keywords:} quasi-symmetric $2$-design; flag-transitive; point-primitive; automorphism group; exceptional group of Lie type
		
		\noindent\text{Mathematics Subject Classification (2020)}: 05B05 20B15 20B25
	\end{abstract}	
	
	\maketitle
	
	\section{Introduction}\label{introduction}
	
	A $2$-$(v,k,\lambda)$ design $\mathcal{D}=(\mathcal{P},\mathcal{B})$ is  a finite incidence structure  with a set $\mathcal{P}$ of $v$ points and a set $\mathcal{B}$ of blocks such that each block contains $k$ points and each two distinct points are contained in  $\lambda$ blocks. It is \emph{non-trivial} if $2<k<v-1$. All the $2$-designs in this paper are assumed to be non-trivial. The replication number $r$ of $\mathcal{D}$ is the number of blocks containing a given point. The number of blocks is conventionally denoted by $b$. If $b=v$, we say that $\mathcal{D}$ is \emph{symmetric}.  Let $\mathcal{D}$ be a $2$-$(v,k,\lambda)$ design with  blocks $B_1,B_2,\cdots,B_b$. The cardinality $|B_i\cap B_j|$ is called a block intersection number of $\mathcal{D}$.  If $D$ is symmetric, then  $D$  has only one block intersection number, namely $\lambda$. Those $2$-designs with two block intersection numbers are called \emph{quasi-symmetric}. This concept  goes back to \cite{Shrikhande1952}. Let $x$, $y$ denote the two block intersection numbers of a quasi-symmetric design  with the standard convention that $x\leq y$. We say that quasi-symmetric design is proper if $x<y$ and improper if $x=y$. There are many well known examples of proper quasi-symmetric $2$-designs. For example, a linear space with $b>v$ is a quasi-symmetric design with $x=0$ and $y=1$. We refer to \cite{Shrikhande, Neumaier1982} for more details and additional examples of quasi-symmetric designs. 
	
	An \emph{automorphism} of $\mathcal{D}$ is a permutation of the points which preserves the blocks. We write $\mathrm{Aut}(\mathcal{D})$ for the full automorphism group of $\mathcal{D}$, and call its subgroups as automorphism groups. A \emph{flag} of  $\mathcal{D}$ is an incident point-block pair. We say that an automorphism group $G$ is \emph{flag-transitive} if it acts transitively on the flags of $\mathcal{D}$. If $G$ acts primitively on the points of $\mathcal{D}$, then $G$ is said to be \emph{point-primitive}.   There have been extensive works on the classification of flag-transitive point-primitive $2$-designs. In 1990, Buekenhout, Delandtsheer, Doyen, Kleidman, Liebeck and Saxl \cite{Buekenhout1990} classified all flag-transitive linear spaces apart from those with a one-dimensional affine automorphism group. Through a series of papers \cite{O'Reilly2005,O'Reilly2005_2,O'Reilly2007,O'Reilly2008},  Regueiro gave the classification  of  biplanes ($\lambda=2$). However, moving to the non-symmetric case poses additional challenges, see \cite{Alavi2023, Alavi2024, Deviller, Liang2016, Liang2016_2, Liang2024}. More recent and interesting classification results are provided in \cite{Li2024, Montinaro2023, Montinaro2024}. 
	
	This paper is a contribution to the study of  quasi-symmetric $2$-designs admitting
	a flag-transitive point-primitive automorphism group. In \cite{Lu}, we proved that for a non-trivial quasi-symmetric $2$-design $\mathcal{D}$ with two block intersection numbers $x=0$ and $2\leq y\leq10$, if $G\leq \mathrm{Aut}(\mathcal{D})$ is flag-transitive and point-primitive, then $G$ is either of  affine type or almost simple type. Moreover,  we proved that the socle of $G$ cannot be an  alternating group, and  gave the classification results for the case where the socle of $G$ is a sporadic simple group. In this paper, we further investigate the  case where the
	socle of $G$ is a finite simple exceptional group of Lie type. The following are our main results.

	\begin{thm}\label{main}
		Let $\mathcal{D}$ be a non-trivial quasi-symmetric $2$-design with block intersection numbers $x=0$ and $2\leq y\leq 10$. If $G\leq\mathrm{Aut}(\mathcal{D})$ is flag-transitive and point-primitive, then the socle of $G$ cannot be a finite simple exceptional group of Lie	type.
	\end{thm}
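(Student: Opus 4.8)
The plan is to combine the standard numerical constraints coming from flag-transitivity with the extra identities forced by the hypothesis $x=0$, so as to pin $G_\alpha$ down to a very short list of maximal subgroups, and then to eliminate each entry on that list. Write $\alpha$ for a point and $G_\alpha$ for its stabiliser. Since $\mathrm{Aut}(\mathcal{D})$ acts faithfully on $\mathcal{P}$, the socle $T=\mathrm{soc}(G)$ is not contained in $G_\alpha$, so $G=TG_\alpha$ and $v=|T:T\cap G_\alpha|$; by point-primitivity $G_\alpha$ is maximal in $G$, and throughout $T$ is a simple exceptional group of Lie type over $\mathbb{F}_q$ with $T\leq G\leq\mathrm{Aut}(T)$. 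Flag-transitivity supplies two facts: $G_\alpha$ is transitive on the $r$ blocks through $\alpha$, whence $r\mid|G_\alpha|$; and, counting incident pairs $(\gamma,B)$ with $\gamma$ ranging over a $G_\alpha$-orbit of size $d$ and $\alpha\in B\ni\gamma$, one obtains $r\mid\lambda d$ for every nontrivial subdegree $d$ of $G$ on $\mathcal{P}$. On the design side, fixing a block $B$ and counting the pairs $(p,B')$ and $(\{p,q\},B')$ with $B'\neq B$ and $p,q\in B\cap B'$, the assumption $x=0$ yields the identity
\[
(r-1)(y-1)=(k-1)(\lambda-1),
\]
in addition to the usual $\lambda(v-1)=r(k-1)$ and $bk=vr$. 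As $\mathcal{D}$ is non-trivial and has two distinct intersection numbers it is not symmetric, so Fisher's inequality $b\ge v$ holds strictly and $k<r$; feeding this into the displayed identity forces $\lambda\geq y+1\geq 3$, and eliminating $k$ gives
\[
v-1=\frac{r(r-1)(y-1)}{\lambda(\lambda-1)}.
\]

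The first step is a reduction to large subgroups. Since $2\leq y\leq 10$ and $\lambda\geq y+1$ we have $\frac{y-1}{\lambda(\lambda-1)}\leq\frac{1}{6}$, so $v<1+\frac{1}{6}r^2$ and hence $r>\sqrt{6(v-1)}$; combined with $r\mid|G_\alpha|$ and $v=|G:G_\alpha|$ this gives $|G_\alpha|^3>4|G|$, so $G_\alpha$ is a large maximal subgroup of the almost simple group $G$. Invoking the classification of maximal subgroups of exceptional groups of Lie type (Liebeck--Seitz and collaborators), the determination of the large ones among them (Alavi--Burness), and the known lists of maximal subgroups of the Suzuki and Ree families, $G_\alpha$ is then, up to the bounded effect of diagonal, field and graph automorphisms, either a maximal parabolic subgroup $P_J$ of $T$, or one of a short explicit list of reductive or subfield maximal subgroups that occur only in small rank or small characteristic (for instance subgroups of type $B_4(q)$ or $C_4(q)$ in $F_4(q)$; $F_4(q)$ or $C_4(q)$ in $E_6(q)$; $\mathrm{SL}_3(q).2$, $\mathrm{SU}_3(q).2$, ${}^{2}G_2(q)$ or a subfield subgroup in $G_2(q)$; the analogues in ${}^{3}D_4(q)$, ${}^{2}E_6(q)$, $E_7(q)$, $E_8(q)$; and the maximal subgroups of ${}^{2}B_2(q)$, ${}^{2}G_2(q)$, ${}^{2}F_4(q)$).

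The second step is a case-by-case elimination. For each candidate $M$ for $G_\alpha$ one has the index $v=|T:M|$ explicitly, as a product of cyclotomic-type factors in $q$, together with $r\mid|M|$, and one substitutes these into the system $\lambda(v-1)=r(k-1)$, $(r-1)(y-1)=(k-1)(\lambda-1)$, $v-1=r(r-1)(y-1)/(\lambda(\lambda-1))$ subject to $3\leq k<r$, $2\leq y\leq 10$, $\lambda\geq y+1$, the global divisibilities $b=vr/k\mid|G|$ and $k\mid vr$, and the integrality of $a:=k(r-1)/y\leq b-1$ (the number of blocks meeting a fixed block) and of $k\lambda/y$, the goal being to show the system has no solution. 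The decisive ingredient is the subdegree condition: from $r>\sqrt{6(v-1)}$ and the bound $\lambda<1+3r/\sqrt{v-1}$ (which follows from the formula for $v-1$ together with $y\leq 10$), choosing $d$ to be the smallest nontrivial subdegree $d_1$ of $G$ on $T/M$ one deduces $v-1<12\,d_1^2$; for the parabolic actions $d_1$ is a small power of $q$ while $v$ is a far larger power of $q$, which is contradictory outside a bounded list of small cases, and those residual cases, together with the finitely many non-parabolic candidates $M$, are then dispatched by direct computation with the relations above.

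The principal obstacle is precisely this second step. The exceptional groups comprise ten families, with up to eight classes of maximal parabolic subgroups (for $E_8(q)$), plus the extra large maximal subgroups and the contributions of the outer automorphism group, and for each configuration one must verify that the above Diophantine system is infeasible. The genuinely delicate cases are the low-rank groups and the parabolic subgroups of smallest index, where $v$ is comparatively small so that $r$, and hence $\lambda$, has more room; there one must lean on the sharp bound $v-1<12\,d_1^2$ and on precise information about the subdegrees rather than on crude order estimates, and one must keep careful track of how diagonal, field and graph automorphisms affect both $|G_\alpha|$ and the subdegree structure.
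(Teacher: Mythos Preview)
Your overall architecture is the same as the paper's: reduce to large maximal subgroups via $|G_\alpha|^3>|G|$, invoke the Alavi--Burness/Alavi--Bayat--Daneshkhah lists, and then eliminate parabolic and non-parabolic candidates using the quasi-symmetric identity $(r-1)(y-1)=(k-1)(\lambda-1)$ together with the subdegree divisibility $r\mid\lambda d$. Your derivations of $\lambda\geq y+1$ and $v-1=r(r-1)(y-1)/\bigl(\lambda(\lambda-1)\bigr)$ are correct (though your constant $12$ should be $18$: one has $v-1<2(y-1)(r/\lambda)^2\leq 18\,d_1^2$).

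There are, however, two genuine gaps in the elimination step. First, the non-parabolic candidates are \emph{not} ``finitely many'': almost every line of the relevant table is an infinite family in $q$ (for instance $B_4(q)<F_4(q)$, $A_1(q)A_5^\epsilon(q)<E_6^\epsilon(q)$, $D_8(q)<E_8(q)$, $A_2^\pm(q)<G_2(q)$, etc.). ``Direct computation'' cannot close these; you need a uniform mechanism that bounds $q$. The paper supplies one: treat $v-1$ and $|X_\alpha|$ as integer polynomials in $q$, compute a B\'ezout relation $f(q)(v-1)+g(q)|X_\alpha|=h(q)$ via \textsc{XGCD} in a computer algebra system, deduce $r/(r,\lambda)\mid(v-1,|G_\alpha|)\leq c\cdot h(q)\cdot|\mathrm{Out}(X)|$, and then force $v\leq 18\bigl(c\,h(q)\,|\mathrm{Out}(X)|\bigr)^2$, which bounds $q$. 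One family, $(G_2(q),A_2^\pm(q))$, survives even this and requires explicit subdegree information borrowed from the $\Omega(7,q)$ action. Your proposal contains no substitute for this.

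Second, your parabolic argument breaks down precisely where it matters most. For $X={}^2B_2(q)$ and $X={}^2G_2(q)$ the parabolic action is $2$-transitive, so the unique nontrivial subdegree is $d_1=v-1$ and the inequality $v-1<18\,d_1^2$ is vacuous; these are infinite families in $q$, not ``residual small cases''. The paper disposes of them by a genuinely arithmetic argument: from $r\mid\lambda q^2$ (resp.\ $\lambda q^3$) and $(y-1)(r-1)=(k-1)(\lambda-1)$ one controls the $p$-parts of $k-1$ and $r$, writes $r=q^2 m/a$ (resp.\ $q^3 m/a$) with $a=(y-1)_p$, and then uses $k\mid v r$ together with the quadratic in $k$ obtained from $\lambda=\dfrac{(k-1)(k-y)}{(k-1)^2-(y-1)(v-1)}$ to force $k$ into a finite list, each of which fails. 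You also omit the $E_6(q)$ exception to the prime-power-subdegree lemma; the paper treats $P_1$ and $P_3$ of $E_6(q)$ separately using known rank-$3$ subdegrees and the XGCD method. Without these two ingredients your case analysis does not close.
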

	
	Combining Theorem \ref{main} with \cite[Theorem 1.2, 1.4]{Lu}, one obtains the following corollary.
	
	\begin{corollary}\label{second1}
		Let $\mathcal{D}$ be a non-trivial quasi-symmetric $2$-design with block intersection numbers $x=0$ and $2\leq y\leq 10$. Let $G\leq\mathrm{Aut}(\mathcal{D})$ be flag-transitive and point-primitive. If the socle of $G$ is not a finite simple classical group, then $\mathcal{D}$ and $G$ are one of the following:
		\begin{enumerate}
			\item[(1)] $\mathcal{D}$ is the unique $2$-$(12,6,5)$ design with block intersection numbers $0$ and $3$, $G=\mathrm{M}_{11}$;
			\item[(2)] $\mathcal{D}$ is the unique $2$-$(22,6,5)$ design with block intersection numbers $0$ and $2$, $G=\mathrm{M}_{22}$ or $\mathrm{M}_{22}:2$.
		\end{enumerate}
	\end{corollary}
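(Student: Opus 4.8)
The plan is to derive the corollary as a synthesis of the classification of $\mathrm{soc}(G)$ into mutually exclusive cases, each already treated either in \cite{Lu} or in Theorem \ref{main}. First I would recall from \cite[Theorem 1.2]{Lu} that, under the stated hypotheses, $G$ is of affine type or of almost simple type. In the almost simple case the socle $T := \mathrm{soc}(G)$ is a nonabelian finite simple group, so by the classification of finite simple groups $T$ is an alternating group, a sporadic simple group, or a simple group of Lie type, the latter splitting into the classical and the exceptional families. Since the hypothesis excludes $T$ from being a finite simple classical group, the configurations left to examine are: the affine case, and the almost simple case with $T$ alternating, sporadic, or exceptional of Lie type.

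Next I would dispatch the cases producing no design. The affine branch and the alternating branch are both ruled out by \cite[Theorem 1.2]{Lu}: no non-trivial quasi-symmetric $2$-design with $x=0$ and $2\leq y\leq 10$ admits a flag-transitive point-primitive group with such a socle. The case in which $T$ is an exceptional group of Lie type is precisely Theorem \ref{main} of the present paper, which shows that this cannot occur. After these reductions the only remaining non-classical possibility is that $T$ is sporadic.

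For the sporadic case I would quote \cite[Theorem 1.4]{Lu}, which classifies the admissible designs when $T$ is a sporadic simple group and whose output is exactly the two pairs recorded in items (1) and (2): the $2$-$(12,6,5)$ design with $G=\mathrm{M}_{11}$, and the $2$-$(22,6,5)$ design with $G=\mathrm{M}_{22}$ or $\mathrm{M}_{22}:2$. Assembling the four branches, the affine, alternating and exceptional branches contribute nothing, the classical branch is excluded by hypothesis, and the sporadic branch contributes precisely the two listed examples; this establishes the corollary.

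The substantive difficulty lies entirely in the inputs rather than in the corollary itself, which is a bookkeeping combination of known results. The one point requiring genuine care is that the enumeration of socle types be exhaustive and that each branch be genuinely covered by the cited statements; in particular, since the recalled summary of \cite{Lu} emphasises the dichotomy, the elimination of an alternating socle, and the sporadic classification, I would want to confirm that the affine branch is indeed disposed of by \cite[Theorem 1.2]{Lu} and that the sporadic classification of \cite[Theorem 1.4]{Lu} yields no configurations beyond the two stated. The hard analytic work, eliminating each family of exceptional groups, is carried out separately in the proof of Theorem \ref{main}.
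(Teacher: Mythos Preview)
Your approach is essentially the paper's own: the corollary is presented there as an immediate combination of Theorem~\ref{main} with \cite[Theorems~1.2 and~1.4]{Lu}, and your case split (affine / alternating / sporadic / exceptional / classical) is exactly the natural unpacking of that one-line deduction.

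The one point needing correction is your handling of the affine branch. According to the paper's own summary of \cite{Lu}, those results establish (i) that $G$ is of affine or almost simple type, (ii) that an alternating socle is impossible, and (iii) the sporadic classification; they do \emph{not} eliminate the affine case. The intended reading of the hypothesis ``the socle of $G$ is not a finite simple classical group'' is that $\mathrm{soc}(G)$ is a nonabelian finite simple group which is not classical, i.e.\ that $G$ is already almost simple with non-classical socle; under this reading the affine type is excluded by hypothesis rather than by any cited theorem. With that adjustment your argument goes through verbatim: alternating is ruled out by \cite[Theorem~1.2]{Lu}, exceptional by Theorem~\ref{main}, classical by hypothesis, and sporadic yields precisely the two listed pairs via \cite[Theorem~1.4]{Lu}. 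Your instinct to flag the affine branch as the one point requiring confirmation was sound; the resolution is simply that it lies outside the scope of the corollary.
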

	
	In \cite{Saxl2002}, Saxl completed the classification of  the  linear spaces admitting almost simple flag-transitive automorphism groups. Combining Theorem \ref{main} with the result in \cite{Saxl2002}, one obtains the following corollary.
	
	\begin{corollary}\label{second2}
		Let $\mathcal{D}$ be a non-trivial quasi-symmetric $2$-design with block intersection numbers $x=0$ and $y\leq 10$. If $G\leq\mathrm{Aut}(\mathcal{D})$ is flag-transitive and point-primitive with exceptional socle of Lie type, then  $\mathcal{D}$ is a Ree unital and  $\mathrm{Soc}(G)={ }^2G_2(q)$ with $q=3^{2n+1}$.
	\end{corollary}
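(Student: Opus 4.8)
We obtain Corollary~\ref{second2} by feeding Theorem~\ref{main} into Saxl's classification of flag-transitive linear spaces, after a short reduction that isolates the case $y=1$. Since $x=0\leq y\leq 10$, three ranges of $y$ must be examined. The case $y=0$ produces no non-trivial design: if every two blocks of $\mathcal{D}$ were disjoint then, because $\mathcal{D}$ is a $2$-design, two points lying on two common blocks would give blocks meeting in two points, so $\lambda=1$; the same reasoning forces each point to lie on a unique block, whence $\mathcal{D}$ would have a single block of size $k=v$, contradicting non-triviality. The range $2\leq y\leq 10$ is exactly the hypothesis of Theorem~\ref{main}, which already forbids $\mathrm{Soc}(G)$ from being an exceptional group of Lie type; so this range contributes nothing under the standing assumption on $\mathrm{Soc}(G)$.

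This leaves $y=1$, where I would first note that $\lambda=1$: two points lying on two distinct blocks would force those blocks to meet in two points, impossible when the intersection numbers are $0$ and $1$. Hence $\mathcal{D}$ is a Steiner system $S(2,k,v)$, that is, a linear space; since the value $0$ actually occurs, $\mathcal{D}$ is not a projective plane, so $b>v$ and $\mathcal{D}$ is proper. As $\mathrm{Soc}(G)$ is a non-abelian simple exceptional group of Lie type, $G$ is almost simple and acts flag-transitively (hence point-primitively) on $\mathcal{D}$. Now Saxl's theorem \cite{Saxl2002} (completing \cite{Buekenhout1990}) classifies the linear spaces admitting a flag-transitive almost simple automorphism group; reading off those entries whose socle is an exceptional group of Lie type leaves precisely the Ree unitals: $\mathcal{D}$ is the $2$-$(q^{3}+1,q+1,1)$ design and $\mathrm{Soc}(G)={}^2G_2(q)$ with $q=3^{2n+1}$ (necessarily $n\geq 1$, since ${}^2G_2(3)$ is not simple).

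To conclude I would check that this configuration indeed satisfies every hypothesis of the corollary, so that it genuinely occurs: the Ree unital has $\lambda=1$ and $b=q^{2}(q^{2}-q+1)>q^{3}+1=v$, so it is a non-trivial quasi-symmetric $2$-design whose block intersection numbers are exactly $0$ and $1$; and ${}^2G_2(q)$ acts $2$-transitively on the $q^{3}+1$ points, hence point-primitively and, together with transitivity on the blocks through a point, flag-transitively. Thus the Ree unital with $\mathrm{Soc}(G)={}^2G_2(q)$, $q=3^{2n+1}$, is the unique possibility, which is what the corollary asserts.

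As for where the effort lies: once Theorem~\ref{main} is in hand the statement is essentially bookkeeping, and the only genuine work is in the $y=1$ case. The points that need care there are the implication ``intersection numbers $0$ and $1$ $\Rightarrow$ proper Steiner linear space'', the correct extraction of the exceptional-socle entries from Saxl's list (making sure that the constant block size required of a $2$-design and the extra point-primitivity hypothesis neither silently discard nor admit anything unexpected), and the verification that the Ree unital is a \emph{proper} quasi-symmetric design. I do not expect any of these steps to present a substantial obstacle.
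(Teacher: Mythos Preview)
Your proposal is correct and follows essentially the same route as the paper: the paper simply states that the corollary follows by combining Theorem~\ref{main} (which eliminates $2\le y\le 10$) with Saxl's classification \cite{Saxl2002} of flag-transitive linear spaces (which handles $y=1$ and singles out the Ree unitals among exceptional socles). Your write-up is in fact more detailed than the paper's, which relegates the $y=1$ reduction to a remark in the preliminaries and does not discuss the degenerate $y=0$ case at all.
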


	This paper is organized as follows. In Section \ref{s2}, we present some preliminary results on flag-transitive $2$-designs and the maximal subgroups of almost simple groups with exceptional socle of Lie type. Then the proof of  Theorem \ref{main} is presented in Section \ref{s3}.
	
	\section{Preliminaries}\label{s2}
	
	\subsection{Flag-transitive $2$-designs}
	We first present some preliminary results on flag-transitive $2$-designs which are used throughout the paper.
	
	\begin{lemma}\label{s21}
		\cite[1.2 and 1.9]{Colburn} Let $\mathcal{D}$ be a $2$-design with the parameters $(v,b,r,k,\lambda)$. Then the following hold:
		\begin{enumerate}
			\item[(i)] $r(k-1)=\lambda(v-1)$, $r>\lambda$;
			\item[(ii)] $vr=bk$;
			\item[(iii)] $b\geq v$, $k\leq r$. If $\mathcal{D}$ is non-symmetric, then $b>v$ and $k<r$.
		\end{enumerate}
	\end{lemma}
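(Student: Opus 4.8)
The plan is to establish (i) and (ii) by elementary double counting, and to obtain (iii) from Fisher's inequality via the point--block incidence matrix, feeding the strict inequality $r>\lambda$ from (i) into the determinant computation.

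First I would prove (ii) by counting the incident point--block pairs (flags) of $\mathcal{D}$ in two ways. Summing over points gives $vr$, since each of the $v$ points lies in exactly $r$ blocks; summing over blocks gives $bk$, since each of the $b$ blocks contains exactly $k$ points. Hence $vr=bk$. Next I would prove (i) by fixing a point $p$ and counting the pairs $(q,B)$ with $q\neq p$ a point and $B$ a block incident with both $p$ and $q$. Grouping by the block, each of the $r$ blocks through $p$ contributes its remaining $k-1$ points, giving $r(k-1)$; grouping by the point $q$, each of the other $v-1$ points lies with $p$ in exactly $\lambda$ blocks, giving $\lambda(v-1)$. Therefore $r(k-1)=\lambda(v-1)$. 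Since $\mathcal{D}$ is non-trivial we have $k<v$, so $k-1<v-1$, and dividing the identity yields $r=\lambda(v-1)/(k-1)>\lambda$.

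The substantive part is (iii), Fisher's inequality. Let $N$ be the $v\times b$ point--block incidence matrix. A direct computation shows that the $(p,p')$ entry of $NN^{\mathsf T}$ counts the blocks incident with both $p$ and $p'$, so the diagonal entries equal $r$ and the off-diagonal entries equal $\lambda$; that is,
\[
NN^{\mathsf T}=(r-\lambda)I_v+\lambda J_v,
\]
where $J_v$ is the all-ones matrix. This matrix has eigenvalue $r+(v-1)\lambda$ on the all-ones vector and eigenvalue $r-\lambda$ with multiplicity $v-1$; using (i) one rewrites $r+(v-1)\lambda=r+r(k-1)=rk$, whence
\[
\det\!\big(NN^{\mathsf T}\big)=(r-\lambda)^{v-1}\,rk.
\]
By (i) we have $r>\lambda$, and $r,k>0$, so this determinant is nonzero and $NN^{\mathsf T}$ has rank $v$. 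Consequently $\mathrm{rank}(N)\geq v$, and since $N$ has $b$ columns this forces $b\geq v$. Combined with (ii) this gives $r=bk/v\geq k$. Finally, if $\mathcal{D}$ is non-symmetric then $b\neq v$, so $b>v$, and then $r=bk/v>k$, yielding the strict inequalities $b>v$ and $k<r$.

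The only real obstacle is the rank argument in (iii): the identities (i) and (ii) follow immediately from counting, but Fisher's inequality requires the nondegeneracy of $NN^{\mathsf T}$, which in turn relies on the eigenvalue and determinant computation and on having already established $r>\lambda$ in (i). I would therefore order the proof so that (i) is proved before (iii) is attempted.
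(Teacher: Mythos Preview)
Your proof is correct and is the standard textbook argument for these classical identities and for Fisher's inequality. Note, however, that the paper does not actually supply a proof of this lemma: it simply cites the result from the CRC Handbook of Combinatorial Designs \cite[1.2 and 1.9]{Colburn} and states it without argument. So there is no ``paper's own proof'' to compare against; you have filled in what the paper takes as background. One minor remark: your deduction of $r>\lambda$ uses $k<v$, which is justified because the paper globally assumes all $2$-designs are non-trivial (so in fact $k<v-1$); you may wish to make that dependence explicit.
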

	
	\begin{lemma}\label{s23}
		\cite[Lemma 2.4]{Zhang2023}	Let $\mathcal{D}$ be a $2$-design with the parameters $(v,b,r,k,\lambda)$, $\alpha$ be a point of $\mathcal{D}$ and $G$ be a flag-transitive automorphism group of $\mathcal{D}$. Then the following hold:
		\begin{enumerate}
			\item[(i)] 
			$|G_{\alpha}|^{3}>\lambda|G|$;
			\item[(ii)] $r\mid \lambda(v-1,|G_{\alpha}|)$, where $G_{\alpha}$ is the stabilizer of $\alpha$ in $G$;
			\item[(iii)] If $d$ is any non-trivial subdegree of $G$, then $r\mid\lambda d$. In particular, $\frac{r}{(r,\lambda)}\mid (v-1,d)$.
		\end{enumerate}
	\end{lemma}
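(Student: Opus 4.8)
The plan is to derive all three assertions from two elementary ingredients: the orbit-counting consequences of flag-transitivity, and the standard design identities of Lemma \ref{s21}. Throughout I use that flag-transitivity forces $G$ to be transitive on points and that the point stabiliser $G_{\alpha}$ acts transitively on the set of $r$ blocks incident with $\alpha$; in particular $r\mid|G_{\alpha}|$ and $|G|=v\,|G_{\alpha}|$ by the orbit--stabiliser theorem.

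I would treat part (iii) first, since it carries the main combinatorial content. Fix a non-trivial subdegree $d$, realised as the length of a $G_{\alpha}$-orbit $\Delta=\beta^{G_{\alpha}}$ on $\mathcal{P}\setminus\{\alpha\}$, and count the set
\[
X=\{(\gamma,C): \gamma\in\Delta,\ \alpha\in C,\ \gamma\in C\}
\]
in two ways. Summing over $\gamma\in\Delta$, each $\gamma\neq\alpha$ lies with $\alpha$ on exactly $\lambda$ blocks, so $|X|=\lambda d$. Summing instead over the $r$ blocks $C$ through $\alpha$, the key point is that $|C\cap\Delta|$ is independent of $C$: any $g\in G_{\alpha}$ carries a block through $\alpha$ to another such block and preserves $\Delta$ setwise, so by the transitivity of $G_{\alpha}$ on the blocks through $\alpha$ all the intersection numbers $|C\cap\Delta|$ coincide, say with common value $m$. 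Hence $|X|=rm$, giving $\lambda d=rm$ and therefore $r\mid\lambda d$. For the refinement, set $s=r/(r,\lambda)$; cancelling $(r,\lambda)$ in $r\mid\lambda d$ and using $\gcd(s,\lambda/(r,\lambda))=1$ yields $s\mid d$, while $r(k-1)=\lambda(v-1)$ from Lemma \ref{s21}(i) gives $r\mid\lambda(v-1)$ and hence $s\mid v-1$ by the same cancellation; thus $s=r/(r,\lambda)\mid(v-1,d)$.

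For part (ii) I would feed part (iii) into a gcd argument. Choosing any one non-trivial subdegree $d$ (which exists because $v>1$), orbit--stabiliser gives $d\mid|G_{\alpha}|$, and combining $s\mid d$ with $s\mid v-1$ from the previous step yields $s\mid(v-1,|G_{\alpha}|)$. Writing $r=s\,(r,\lambda)$ and using $(r,\lambda)\mid\lambda$, I then obtain $r\mid(r,\lambda)\,(v-1,|G_{\alpha}|)\mid\lambda\,(v-1,|G_{\alpha}|)$, which is exactly the claim. Part (i) is then purely arithmetic: since $r\mid|G_{\alpha}|$ we have $|G_{\alpha}|\ge r$, and I reduce the target $|G_{\alpha}|^{3}>\lambda|G|=\lambda v\,|G_{\alpha}|$ to the design inequality $r^{2}>\lambda v$. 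The latter follows from the identity
\[
r^{2}-\lambda v=r^{2}-\big(r(k-1)+\lambda\big)=r(r-k)+(r-\lambda),
\]
where I substituted $\lambda v=\lambda(v-1)+\lambda=r(k-1)+\lambda$; both summands are nonnegative and the second is strictly positive because $k\le r$ and $r>\lambda$ by Lemma \ref{s21}, so $r^{2}>\lambda v$ and hence $|G_{\alpha}|^{3}\ge r^{2}|G_{\alpha}|>\lambda v\,|G_{\alpha}|=\lambda|G|$.

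The only genuinely delicate step is the constancy of $|C\cap\Delta|$ over all blocks $C$ through $\alpha$ in part (iii); everything else is bookkeeping with divisibility and the relations of Lemma \ref{s21}. I would therefore spell out there that flag-transitivity is used precisely to guarantee that $G_{\alpha}$ is transitive on the blocks through $\alpha$, since this transitivity is what makes the two counts of $X$ agree and is the single place where the hypothesis enters.
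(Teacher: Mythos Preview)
The paper does not supply its own proof of this lemma; it simply cites \cite[Lemma~2.4]{Zhang2023}. Your argument is correct and is essentially the standard one: the double count over a $G_\alpha$-orbit for (iii), the divisibility bootstrap to (ii), and the Fisher-type inequality $r^2>\lambda v$ for (i) are exactly how these facts are usually established, so there is nothing to compare beyond noting that your write-up would serve as a self-contained replacement for the citation.
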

	
	It is well known that $\mathcal{D}$ is a quasi-symmetric design with intersection numbers $0$ and $1$ if  and only if $\mathcal{D}$ is a finite linear space with  $b>v$ (see \cite[Chapter 3]{Shrikhande}). For the linear space  $\mathcal{D}$ with a flag-transitive automorphism group $G$, we know that $G$ is point-primitive \cite{Higman1969}. Moreover, $G$ is either almost simple or of affine type \cite{Buekenhout1988}. If   the socle of $G$ is a finite simple
	exceptional group of Lie type, then $\mathcal{D}$ is a Ree unital and $\mathrm{Soc}(G)={ }^2G_2(q)$ with $q=3^{2n+1}$ (see \cite{Delandtsheer1986,Kleidman1990,Saxl2002}).  Thus we suppose that $\mathcal{D}$ is a quasi-symmetric $2$-design with block intersection numbers $x=0$ and $y\geq2$ in the following. Then we have the following lemma.

	\begin{lemma}\label{s22}
		Let $\mathcal{D}$ be a non-trivial quasi-symmetric $2$-design with block intersection numbers $x=0$ and $y\geq2$. Then the following relations hold:
		\begin{enumerate}
			\item[(i)] $(y-1)(r-1)=(k-1)(\lambda-1)$;
			\item[(ii)] $b\leq v(v-1)/k$, $r\leq v-1$, $y< \lambda\leq k-1$;
			\item[(iii)] $y\mid k$, $y\mid(r-\lambda)$;
			\item[(iv)] $v<\frac{k^{2}-k}{y-1}$;
			\item[(v)] $	(y-1)\cdot\frac{r^2}{\lambda^2}<v-1<2(y-1)\cdot\frac{r^2}{\lambda^2}$. In particular,  $v\leq2(y-1)\cdot\frac{r^{2}}{(r,\lambda)^{2}}$. 
		\end{enumerate}
	\end{lemma}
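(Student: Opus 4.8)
The plan is to deduce all five items from three sources: a double count of the blocks meeting a fixed block (which gives (i) and, after routine algebra, (iv) and (v)); the spectrum of the block graph (which gives the divisibilities in (iii)); and a positive–definiteness/rank argument together with Lemma~\ref{s21} (which gives the inequalities in (ii)). For (i), fix a block $B$ and let $n_y$ be the number of blocks $B'\neq B$ with $|B\cap B'|=y$; since $x=0$, every block other than $B$ meets $B$ in $0$ or $y$ points. Counting the pairs $(q,B')$ with $q\in B\cap B'$, $B'\neq B$ in two ways gives $n_y\,y=k(r-1)$, and counting the pairs $(\{q_1,q_2\},B')$ with $\{q_1,q_2\}\subseteq B\cap B'$, $B'\neq B$ gives $n_y\binom{y}{2}=\binom{k}{2}(\lambda-1)$; dividing the second relation by the first yields $(y-1)(r-1)=(k-1)(\lambda-1)$, which is (i). The bound $y<\lambda$ is then immediate: $\mathcal{D}$ is proper, hence non-symmetric, so $r>k$ by Lemma~\ref{s21}(iii), and $(y-1)(r-1)=(k-1)(\lambda-1)$ with $r-1>k-1$ forces $\lambda-1>y-1$; in particular $\lambda\ge 3$.

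For (iv) and (v) I would simply combine (i) with $r(k-1)=\lambda(v-1)$ and $vr=bk$ from Lemma~\ref{s21}. Substituting $r-1=\frac{(k-1)(\lambda-1)}{y-1}$ into $v-1=\frac{r(k-1)}{\lambda}$ and simplifying gives $v-1=\frac{(k-1)^2}{y-1}-\frac{(k-1)(k-y)}{\lambda(y-1)}$; the subtracted term is positive since $k>y$, so $v-1<\frac{(k-1)^2}{y-1}$ and hence $v<\frac{(k-1)^2+(y-1)}{y-1}\le\frac{k^2-k}{y-1}$ because $y<k$, which is (iv). Similarly $k-1=\frac{(y-1)(r-1)}{\lambda-1}$ gives $v-1=(y-1)\frac{r(r-1)}{\lambda(\lambda-1)}$; from $r>\lambda\ge 2$ one has $\frac{r}{\lambda}<\frac{r-1}{\lambda-1}<\frac{2r}{\lambda}$ (the left inequality because $r>\lambda$, the right because $r(\lambda-2)+\lambda>0$), and multiplying by $(y-1)\frac{r}{\lambda}$ yields the two-sided estimate of (v); finally $\frac{r^2}{\lambda^2}\le\frac{r^2}{(r,\lambda)^2}$, so $v-1<2(y-1)\frac{r^2}{(r,\lambda)^2}$ and, the right side being an integer, $v\le 2(y-1)\frac{r^2}{(r,\lambda)^2}$.

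For (iii) I would pass to the block graph $\Gamma$ on the $b$ blocks, two blocks being adjacent when they meet in $y$ points. Because $x=0$, the point–block incidence matrix $N$ satisfies $N^{\top}N=kI+yA$ with $A$ the adjacency matrix of $\Gamma$, so $A=\frac{1}{y}(N^{\top}N-kI)$. From $NN^{\top}=(r-\lambda)I+\lambda J$ the eigenvalues of $N^{\top}N$ are $rk$ (multiplicity $1$), $r-\lambda$ (multiplicity $v-1$) and $0$ (multiplicity $b-v$); hence those of $A$ are $\frac{k(r-1)}{y}$, $\frac{r-\lambda-k}{y}$ (multiplicity $v-1\ge 1$) and $-\frac{k}{y}$ (multiplicity $b-v\ge 1$, the latter because $\mathcal{D}$ is non-symmetric). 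Since $A$ is a symmetric integer matrix, its rational eigenvalues are integers, so $y\mid k$ and $y\mid(r-\lambda-k)$, whence $y\mid(r-\lambda)$; together these are (iii).

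It remains to prove $r\le v-1$ (equivalently $\lambda\le k-1$, via $r(k-1)=\lambda(v-1)$), after which $b\le v(v-1)/k$ follows from $vr=bk$. Fix a point $p$; the $r$ blocks through $p$ all contain $p$, hence meet pairwise in exactly $y$ points, so the Gram matrix of their $r$ incidence column vectors in $\mathbb{R}^{v}$ is $(k-y)I_r+yJ_r$, which is positive definite; the vectors are therefore linearly independent and $r\le v$. The case $r=v$ is impossible: it would give $v(k-1)=\lambda(v-1)$ with $\lambda$ an integer, forcing $(v-1)\mid(k-1)$ and so $v\le k$, contrary to non-triviality. The algebra behind (i), (iv) and (v) is entirely routine; the two steps that need a real idea are the eigenvalue computation underlying (iii) and the positive-definite Gram-matrix argument for $r\le v-1$, and I expect the fussiest point to be excluding the boundary case $r=v$, which nonetheless reduces to a one-line divisibility check. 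No input beyond Fisher's inequality and the integrality of graph spectra is required.
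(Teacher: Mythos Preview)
Your argument is correct in every part. For part~(v) your approach coincides with the paper's: both derive $v-1=(y-1)\dfrac{r(r-1)}{\lambda(\lambda-1)}$ from (i) and Lemma~\ref{s21}(i), and then sandwich $\dfrac{r-1}{\lambda-1}$ between $\dfrac{r}{\lambda}$ and $\dfrac{2r}{\lambda}$. The difference lies in parts (i)--(iv): the paper simply imports these from \cite[Lemma~2.3]{Lu}, whereas you supply self-contained proofs---the double count for~(i), the block-graph spectrum for~(iii), and the Gram-matrix rank bound for $r\le v-1$ in~(ii). Your route therefore makes the lemma independent of the companion paper, at the cost of a page of (standard) combinatorics; the paper's route keeps the present article short but relies on the reader having~\cite{Lu} to hand. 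Both are legitimate; nothing in your proof would need to change.
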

	
	\begin{proof}
		It suffices to  prove part (v) since other  properties are taken from \cite[Lemma 2.3]{Lu}.  Note that
		\[(y-1)\cdot\frac{r}{\lambda}<k-1=(y-1)\cdot\frac{r-1}{\lambda-1}<2(y-1)\cdot\frac{r}{\lambda}\]
		and $k-1=(v-1)\cdot\frac{\lambda}{r}$ by Lemma \ref{s21}(i) and Lemma \ref{s22}(i). Then we obtain 
		\[(y-1)\cdot\frac{r}{\lambda}<k-1=(v-1)\cdot\frac{\lambda}{r}<2(y-1)\cdot\frac{r}{\lambda},\]
 proving part (v).
	\end{proof}

	\subsection{Maximal subgroups of  almost simple groups with exceptional socle of Lie type}
	To prove Theorem \ref{main}, we require some results on maximal subgroups of  almost simple groups with exceptional socle of Lie type. The \emph{socle} of a finite group is defined as the subgroup generated by   all the minimal normal subgroups. A group $G$ is said to be \emph{almost simple} with socle $X$ if $X\unlhd G\leq \mathrm{Aut}(X)$, where $X$ is a non-abelian simple group.  Throughout this paper, we denote by $[n]$ a group of order $n$. We also adopt the standard Lie notation for groups of Lie type.  Specifically, we write $A_{n-1}(q)$ and $A^{-}_{n-1}(q)$ in place of $\mathrm{PSL}(n,q)$ and $\mathrm{PSU}(n,q)$ respectively, and $B_{n}(q)$, $C_{n}(q)$, $D^{\pm}_{n}(q)$, and $E^{-}_6(q)$  instead of $\mathrm{P}\Omega(2n+1,q)$,  $\mathrm{PSp}(2n,q)$, $\mathrm{P}\Omega^{\pm}(2n,q)$,  and  ${}^2E_6(q)$ respectively. Further notation in group theory can be found in \cite{Kleidman, Dixon}. For a positive integer $n$ and prime number $p$, let $n_{p}$  denote the $p$-part of $n$, that is, $n_{p}=p^{t}$ where $p^{t}\mid n$ but $p^{t+1}\nmid n$. The first lemma is an elementary result on subgroups of almost simple groups.

	\begin{lemma}\label{stabilizer}
		\cite[Lemma 3.1]{Alavi2022} Let $G$ be an almost simple group with socle $X$, and let $H$ be maximal in $G$ not containing $X$. Then $G=HX$, and $|H|$ divides $|\mathrm{Out}(X)|\cdot|H\cap X|$.
	\end{lemma}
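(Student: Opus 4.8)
This is an elementary statement and its proof is short, so I do not expect any genuine obstacle; the plan is simply to chase through the isomorphism theorems.

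First I would show $HX=G$. Since $X=\mathrm{Soc}(G)$ is normal in $G$, the set $HX$ is a subgroup of $G$ containing $H$; by maximality of $H$ it equals either $H$ or $G$, and $HX=H$ would force $X\leq H$, contrary to hypothesis. Hence $HX=G$.

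Next I would extract the divisibility relation. The second isomorphism theorem gives $H/(H\cap X)\cong HX/X=G/X$, so $|H:H\cap X|=|G:X|$ and therefore $|H|=|H\cap X|\cdot|G:X|$. Since $G$ is almost simple with socle $X$ we have $X\unlhd G\leq\mathrm{Aut}(X)$; as $X$ is non-abelian simple it is centreless, so $\mathrm{Aut}(X)/X\cong\mathrm{Out}(X)$ and $G/X$ embeds into $\mathrm{Out}(X)$, whence $|G:X|$ divides $|\mathrm{Out}(X)|$. Combining, $|H|=|H\cap X|\cdot|G:X|$ divides $|\mathrm{Out}(X)|\cdot|H\cap X|$, as claimed.

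The only points that need a moment's care are the two appeals to the normality of $X$ in $G$ — one to make $HX$ a subgroup in the first step, the other to place $G$ inside $\mathrm{Aut}(X)$ in the second — but both are built into the definition of an almost simple group, so there is nothing deeper to do. In the sequel (Section \ref{s3}) this lemma is exactly what lets one pass back and forth between $|G_\alpha|$ and $|G_\alpha\cap X|$ when the point stabiliser $G_\alpha$ is a maximal subgroup not containing the socle.
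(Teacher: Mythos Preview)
Your argument is correct and entirely standard: the maximality of $H$ together with $X\trianglelefteq G$ forces $HX=G$, and then the second isomorphism theorem plus the embedding $G/X\hookrightarrow\mathrm{Out}(X)$ gives the divisibility. The paper itself does not prove this lemma but merely quotes it from \cite[Lemma~3.1]{Alavi2022}, so there is no in-paper proof to compare against; your write-up is exactly the elementary justification one would expect.
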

	
	A proper subgroup $H$ of $G$ is said to be large if $|H|^3\geq|G|$. In \cite{Alavi}, Alavi and Burness determined  all the large maximal subgroups of finite simple groups. Furthermore, in \cite{Alavi2022}, Alavi, Bayat and Daneshkhah determined all the large maximal subgroups of almost simple exceptional groups of Lie type.
	\begin{lemma}\label{max}
		\cite[Theorem 1.2]{Alavi2022} Let $G$ be a finite almost simple group whose socle $X$ is a finite simple exceptional group of Lie type, and let $H$ be a maximal subgroup of $G$ not containing $X$. If $H$ is a large subgroup of $G$, then $H$ is either parabolic, or one of the subgroups listed in Table \ref{large_maximal}.
	\end{lemma}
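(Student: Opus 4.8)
The plan is to reduce to the known classification of maximal subgroups of almost simple exceptional groups of Lie type and then perform a uniform order comparison. By the reduction theorem of Liebeck and Seitz, together with the determination of the maximal-rank subgroups by Liebeck, Saxl and Seitz, the exotic local subgroups of Cohen, Liebeck, Saxl and Seitz, and the complete maximal-subgroup lists for the Suzuki, Ree, ${}^2F_4$, ${}^3D_4$ and ${}^2G_2$ groups, every maximal subgroup $H$ of $G$ not containing $X$ lies in one of finitely many explicit families: parabolic subgroups; normalisers $N_G(\overline M_\sigma)$ of $\sigma$-stable reductive subgroups $\overline M$ of the ambient simple algebraic group $\overline G$ (of maximal rank or not); exotic local subgroups; subfield, field-type and twisted subgroups involving $X(q_0)$ with $q=q_0^r$; and almost simple subgroups whose socle $S$ is either a generic group of Lie type in the defining characteristic $p$ or lies in a bounded list of exceptions. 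Since parabolic subgroups are allowed by the conclusion, it suffices to show that in each remaining family the largeness condition $|H|^3\geq|G|$ forces the pair $(X,H)$ into Table \ref{large_maximal}.

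The core tool is an order estimate. For a simple algebraic group $\overline G$ of dimension $D$ over $\overline{\bF}_p$, the corresponding finite group $X$ over $\bF_q$ satisfies $\tfrac12 q^{D}<|X|<q^{D}$ up to the usual centre and cyclotomic corrections, with an analogous bound for the twisted groups coming from their root-system order formulas. By Lemma \ref{stabilizer}, $|H|$ divides $|\mathrm{Out}(X)|\cdot|H\cap X|$, so it is enough to bound $|H\cap X|$ while carrying the bounded factor $|\mathrm{Out}(X)|$. For a positive-dimensional reductive subgroup $\overline M\leq\overline G$ of dimension $d$, one has $|H\cap X|<c\,q^{d}$ for an absolute constant $c$; substituting into $|H|^3\geq|G|$ and absorbing the lower-order factors yields the necessary condition $3d\geq D$, that is $d\geq D/3$. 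Cross-referencing the Liebeck--Seitz--Testerman tables, which record the dimension of every maximal reductive subgroup of each $\overline G$, one reads off exactly those meeting this threshold; for instance in $E_8(q)$, where $D=248$, only $D_8$, $A_1E_7$ and $A_2E_6$ survive, whereas $A_8$ (with $d=80$) and all smaller subgroups are excluded. These survivors are precisely the reductive entries of Table \ref{large_maximal}, while maximal tori, their normalisers and the exotic local subgroups have order bounded by a polynomial in $q$ of degree far below $D/3$ and are eliminated immediately.

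For the subfield and twisted subgroups, $H$ is an extension of $X(q_0)$ by outer automorphisms with $q=q_0^r$ for a prime $r$, so $|H|\approx|\mathrm{Out}(X)|\cdot q^{D/r}$ and largeness reads roughly $3D/r\geq D$, forcing $r\leq3$; the case $r=2$ and the triality-related $r=3$ instances for $D_4$-type groups are then decided by the sharp order formulas, and only those that pass are retained. It remains to treat the almost simple subgroups $H$ with socle $S$. In the generic Lie-type case the rank and hence $\dim S$ are bounded, and the same dimension inequality applied within $S$ leaves only finitely many possibilities; in the bounded-exception case $|S|$ is an absolute constant, so $|H|^3$ is bounded while $|G|\to\infty$ with $q$, whence $H$ can be large only for finitely many pairs $(X,q)$, which are checked directly against order data.

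The main obstacle is sharpness at the boundary $d\approx D/3$, where the crude estimate $|X|\asymp q^{D}$ and the factor $|\mathrm{Out}(X)|$ (which for field-graph automorphisms can be of size comparable to $6a$ when $q=p^a$) are not decisive, and $|H|^3\geq|G|$ must be settled from the exact group orders; the genuine work concentrates on borderline pairs such as $A_2E_6<E_8$, the subfield subgroups with $r=3$, and the intermediate-dimensional generic subgroups, where the outcome can depend on small values of $q$. A secondary difficulty is that the twisted groups ${}^2B_2$, ${}^2G_2$, ${}^2F_4$, ${}^3D_4$ and ${}^2E_6$ do not fit the untwisted template, so their order formulas, the structure of $N_G(\overline M_\sigma)$ and the relevant notion of dimension must be taken from the twisted root system, and these families are best handled through their explicit maximal-subgroup classifications rather than the uniform argument. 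Once every family has been processed, the non-parabolic large maximal subgroups that survive are precisely those recorded in Table \ref{large_maximal}.
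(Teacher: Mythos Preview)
The paper does not prove this lemma at all: it is quoted verbatim as \cite[Theorem~1.2]{Alavi2022} and used as a black box, so there is no ``paper's own proof'' to compare against. Your proposal, by contrast, is a sketch of how the cited result of Alavi, Bayat and Daneshkhah (building on Alavi--Burness) is actually established in those references, and at the strategic level your outline is accurate: one invokes the Liebeck--Seitz reduction theorem and the known maximal-subgroup classifications, then applies the dimension heuristic $3d\geq D$ coming from $|H|^3\geq|G|$ and $|X|\asymp q^{D}$, and finally treats subfield, twisted, and almost simple subgroups by explicit order comparison.

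That said, what you have written is an outline, not a proof. You yourself flag that ``the genuine work concentrates on borderline pairs'' and that the twisted families ``are best handled through their explicit maximal-subgroup classifications,'' but you do not carry out any of this work. Producing Table~\ref{large_maximal} requires, for each exceptional type, running through every maximal subgroup family, computing exact orders (not just the leading $q$-power), and deciding the inequality $|H|^3\geq|G|$ including the contribution of $|\mathrm{Out}(X)|$ and the small-$q$ exceptions; this is precisely the content of the cited papers and occupies many pages of case analysis there. Your dimension argument also needs care: the bound $|H\cap X|<cq^{d}$ with an ``absolute constant'' $c$ is not sharp enough at the threshold, and for several entries in the table (for example the sporadic and small-Lie-type rows at specific $q$) the outcome is not visible from dimensions at all. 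So while nothing in your plan is wrong, it stops well short of a proof; for the purposes of the present paper the correct move is simply to cite \cite{Alavi2022}, as the authors do.
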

	\begin{table}
		\begin{center}
			\caption{Large maximal non-parabolic subgroups $H$ of   $G$ with socle $X$  in Lemma \ref{max}.}\label{large_maximal}
			\resizebox{\textwidth}{!}{
	\begin{tabular}{ccc}
		\hline$X$ & $H \cap X$ or type of $H$ & Conditions \\
		\hline
		${ }^2B_2(q)\left(q=2^{2n+1} \geq 8\right)$ & $(q+\sqrt{2 q}+1): 4$ &	$q=8,32$ \\
		        &   ${ }^2B_2\left(q^{1/3}\right)$	& $q>8,3 \mid 2 n+1$ \\
	    ${ }^2G_2(q)\left(q=3^{2 n+1} \geq 27\right)$ & 	$A_1(q)$ & \\
				&	${ }^2G_2\left(q^{1/3}\right)$ & 	$3 \mid 2 n+1$\\
	    ${ }^3D_4(q)$ & $A_1\left(q^3\right)A_1(q),\left(q^2+\epsilon q+1\right)A_2^\epsilon(q), G_2(q)$ & 	$\epsilon= \pm$\\
		    	&   ${ }^3D_4\left(q^{1/2}\right)$ & 	$q$ square\\
				&   $7^2: \mathrm{SL}(2,3)$ &	$q=2$ \\
        ${ }^2F_4(q)\left(q=2^{2 n+1} \geq 8\right)$ & ${ }^2B_2(q)\wr2, B_2(q):2,{ }^2F_4\left(q^{1/3}\right)$ & \\
				&   $\mathrm{SU}(3,q): 2, \mathrm{PGU}(3,q):2$ &	$q=8$ \\
				&    $A_2(3):2, A_1(25)$, $\mathrm{A}_6.2^2$, $5^2: 4\mathrm{A}_4$ &	$q=2$\\
        $G_2(q)$ & 	$A_2^\pm(q), A_1(q)^2, G_2\left(q^{1/r}\right)$ &	$r=2,3$ \\
				&   ${ }^2G_2(q)$ &	$q=3^a, a$ is odd \\
				&    $G_2(2)$ &	$q=5,7$ \\
				&    $A_1(13), \mathrm{J}_2$ &	$q=4$ \\
				&    $\mathrm{J}_1$ & $q=11$ \\
				&    $2^3.A_2(2)$ &		$q=3,5$ \\
        $F_4(q)$ & $B_4(q), D_4(q),{ }^3D_4(q)$ & \\
				&    $F_4\left(q^{1/r}\right)$ & $r=2,3$ \\
				&    $A_1(q)C_3(q)$ & 	$p \neq 2$ \\
				&    $C_4(q), C_2\left(q^2\right), C_2(q)^2$ & 	$p=2$ \\
				&    ${ }^2F_4(q)$ &	$q=2^{2n+1} \geq2$ \\
				&    ${ }^3D_4(2)$ &	$q=3$ \\
				&    $\mathrm{A}_{9}, \mathrm{A}_{10}, A_3(3), \mathrm{J}_2, \mathrm{S}_6\wr\mathrm{S}_2$ & 	$q=2$ \\
				&    $A_1(q)G_2(q)$ & 	$q>3$ odd \\
		$E_6^\epsilon(q)$ & $A_1(q)A_5^\epsilon(q), F_4(q)$ &  \\
				&    $(q-\epsilon) D_5^\epsilon(q)$ & 	$\epsilon=-$ \\
				&    $C_4(q)$ &	$p \neq 2$ \\
				&    $E_6^{\pm}\left(q^{1/2}\right)$ & 	$\epsilon=+$ \\
				&    $E^\epsilon_6\left(q^{1/3}\right)$ & \\
				&    $(q-\epsilon)^2.D_4(q)$ &	$(\epsilon, q)\neq(+, 2)$ \\
				&    $\left(q^2+\epsilon q+1\right).{ }^3D_4(q)$ & 	$(\epsilon, q)\neq(-,2)$ \\
                &	 $\mathrm{J}_3, \mathrm{A}_{12}, B_3(3), \mathrm{Fi}_{22}$ &	$(\epsilon, q)=(-,2)$ \\
        $E_7(q)$ & $(q-\epsilon)E_6^\epsilon(q), A_1(q)D_6(q), A_7^\epsilon(q), A_1(q)F_4(q), E_7\left(q^{1/r}\right)$ &	$\epsilon=\pm$ and $r=2,3$ \\
				&    $\mathrm{Fi}_{22}$ & 	$q=2$ \\
	    $E_8(q)$ & $A_1(q)E_7(q), D_8(q), A_2^\epsilon(q)E_6^\epsilon(q), E_8\left(q^{1/r}\right)$ & $\epsilon=\pm$ and $r=2,3$ \\
		\hline
	\end{tabular}}
\end{center}
\end{table}
	\begin{remark}
		\begin{enumerate}
			\item[(i)] For the orders of simple groups of Lie type, one can refer to, for example, \cite[Table 5.1.A, B]{Kleidman}.
			\item[(ii)] In Table \ref{large_maximal}, the type of $H$ is an approximate description of the group-theoretic structure of $H$. For precise structure, one may refer to \cite[Table 8.16]{Bray} for ${}^2B_2(q)$, \cite[Table 8.43]{Bray} for ${}^2G_2(q)$, \cite[Table 8.51]{Bray} for ${}^3D_4(q)$, \cite{Malle1991} for ${}^2F_4(q)$, \cite[Table 4.1]{Wilson2009} for $G_2(q)$, \cite{Craven2023} for $F_4(q)$ and $E^{\epsilon}_6(q)$, \cite{Craven2022} for $E_7(q)$ and \cite[Table 5.1]{Liebeck1992} for $E_8(q)$.
			\end{enumerate}
	\end{remark}

	\begin{lemma}\label{p-power}
		\cite[3.9]{Liebeck1987} If $X$ is a group of Lie type in characteristic $p$, acting on the set of cosets of a maximal parabolic subgroup, and $X$ is not $A_{n-1}(q)$, $D_{n}(q)$ (with $n$ odd) and $E_6(q)$, then there is a unique subdegree which is a power of $p$.
	\end{lemma}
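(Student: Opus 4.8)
The plan is to describe the subdegrees of the action of $X$ on $\Omega = X/P$ (with $P=P_J$ maximal, so that $J=S\setminus\{s\}$ for a single node $s$) combinatorially through the Bruhat decomposition, and then read off which of them are powers of $p$. Write $(B,N)$ for a split BN-pair of $X$ with Weyl group $W$, simple reflections $S$, and $W_J=\langle J\rangle$. The Bruhat decomposition shows that the $P$-orbits on $\Omega$ are in bijection with the double cosets $W_J\backslash W/W_J$, and for the distinguished (minimal length) representative $d$ of such a double coset the associated subdegree equals $[P:P\cap {}^{d}P]$. Using $|P|=|B|\,P_J(q)$ with $P_J(q)=\sum_{w\in W_J}q^{\ell(w)}$ the Poincar\'e polynomial, the standard index computation (Curtis--Iwahori--Kilmoyer) yields the subdegree $n_d=q^{\ell(d)}\cdot\frac{P_J(q)}{P_K(q)}$, where $K=W_J\cap {}^{d}W_J$ is again a standard parabolic subgroup of $W_J$. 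First I would record this formula together with the factorisation $\frac{P_J(q)}{P_K(q)}=\sum_a q^{\ell(a)}$, the sum being over the minimal-length coset representatives $a$ of $W_K$ in $W_J$.

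The second step is the $p$-power criterion. Since $q=p^f$, the factor $q^{\ell(d)}$ is a power of $p$, while $m_d:=\frac{P_J(q)}{P_K(q)}=\sum_a q^{\ell(a)}$ is a polynomial in $q$ with non-negative integer coefficients whose terms of positive degree are divisible by $p$; hence $m_d\equiv 1\pmod p$, so $m_d$ is coprime to $p$ and equals $1$ exactly when the only coset representative is $a=e$, i.e. when $W_K=W_J$. Consequently $n_d$ is a power of $p$ if and only if $m_d=1$, i.e. if and only if $K=W_J\cap {}^{d}W_J=W_J$, which by finiteness is equivalent to $d\in N_W(W_J)$. For such $d$ the double coset collapses to the single coset $W_Jd$, so the number of distinguished representatives giving a $p$-power subdegree equals $|N_W(W_J)/W_J|$. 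The trivial double coset ($d=e$) always contributes the subdegree $1$, so proving the lemma amounts to showing that, apart from $d=e$, there is exactly one further distinguished representative in $N_W(W_J)$, that is, $|N_W(W_J)/W_J|=2$.

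The heart of the argument, and the step I expect to be the main obstacle, is this normaliser computation. The decisive fact is that the excluded list --- $A_{n-1}(q)$, $D_n(q)$ with $n$ odd, and $E_6(q)$ --- consists precisely of the (untwisted) types for which the longest element $w_0$ does not act as $-1$ on the root space, equivalently for which $-w_0$ is a non-trivial Dynkin diagram automorphism; for every remaining type one has $w_0=-1$. When $w_0=-1$ it is central in $W$, hence normalises $W_J$, and since $\ell(w_0)>\ell(w_{0,J})$ for a proper $J$ we have $w_0\notin W_J$; the minimal-length element of $W_Jw_0$ is $d_0=w_{0,J}w_0$, which lies in $N_W(W_J)$ and satisfies $d_0^2=e$, giving $\langle d_0W_J\rangle\cong\mathbb{Z}_2\leq N_W(W_J)/W_J$. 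To finish I would invoke Howlett's description of normalisers of parabolic subgroups of Coxeter groups, $N_W(W_J)=W_J\rtimes\Gamma_J$, where every element of $\Gamma_J$ permutes $\Phi_J^{+}$ and hence induces a symmetry of the diagram of $W_J$ together with a sign on the one-dimensional fixed space of the maximal parabolic $W_J$; the task is to verify, type by type (or uniformly from Howlett's tables), that for a corank-one $W_J$ the only such symmetry realised inside $W$ is the one coming from $w_0$, so that $\Gamma_J=\langle d_0W_J\rangle$ has order $2$. The subtlety here is exactly that diagram automorphisms of $W_J$ (for instance the triality-type permutations when $W_J$ has several isomorphic components) must be shown \emph{not} to be induced by elements of $W$; this is where the hypothesis $w_0=-1$ is used and where the excluded types genuinely fail (for $A_{n-1}(q)$ with the stabiliser of a $k$-space, $k\neq n/2$, one even gets $N_W(W_J)=W_J$, hence no non-trivial $p$-power subdegree). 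Granting $|N_W(W_J)/W_J|=2$, the unique non-trivial $p$-power subdegree is $n_{d_0}=q^{\ell(d_0)}=q^{\ell(w_0)-\ell(w_{0,J})}=q^{|\Phi^{+}|-|\Phi_J^{+}|}=|U_P|$, the order of the unipotent radical of $P$, completing the proof.

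Finally I would address the twisted groups ${}^2A_n$, ${}^2D_n$, ${}^3D_4$, ${}^2E_6$ and the Suzuki--Ree families, for which one works with the relative BN-pair: the relative Weyl group $\overline W$ replaces $W$, the reflections carry possibly unequal parameters $q_s$ (each a power of $p$), and the subdegree becomes $n_d=\big(\textstyle\prod_i q_{s_i}\big)\cdot\frac{\widetilde P_J}{\widetilde P_K}$ with $\widetilde P$ the corresponding weighted Poincar\'e polynomials. Since the leading factor is still a power of $p$ and the cofactor is still $\equiv 1\pmod p$, the $p$-power criterion and the reduction to $|N_{\overline W}(\overline W_J)/\overline W_J|$ are unchanged; the relevant relative Weyl groups are of types $C$, $B$, $G_2$, $F_4$ and rank-one or dihedral, for all of which $w_0=-1$, so these groups are (correctly) not excluded and the same normaliser argument yields a unique non-trivial $p$-power subdegree.
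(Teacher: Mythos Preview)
The paper does not prove this lemma at all; it is simply quoted from \cite[3.9]{Liebeck1987} as a known result and used as a black box in Lemma~\ref{parabolic}. Your proposal therefore goes well beyond what the paper does: you are sketching a proof of the cited theorem itself.

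Your outline is essentially the standard argument and is sound. The Curtis--Iwahori--Kilmoyer formula $n_d=q^{\ell(d)}P_J(q)/P_K(q)$ together with the congruence $P_J(q)/P_K(q)\equiv 1\pmod p$ does reduce the question to counting nontrivial distinguished representatives in $N_W(W_J)$, i.e.\ to showing $|N_W(W_J)/W_J|=2$ for every corank-one $J$ in the non-excluded types. Your identification of the excluded list with the types for which $-w_0$ is a nontrivial diagram automorphism is the right heuristic, and it cleanly produces the element $d_0=w_{0,J}w_0\in N_W(W_J)\setminus W_J$, giving the lower bound and the explicit subdegree $q^{|\Phi^+|-|\Phi_J^+|}=|U_P|$. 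The upper bound $|N_W(W_J)/W_J|\le 2$ is, as you say, the real content; it does require a type-by-type check via Howlett's description $N_W(W_J)=W_J\rtimes N_J$ (or the tables in Howlett's paper), and the argument is not purely a consequence of $w_0=-1$. Your treatment of the twisted groups via the relative BN-pair, with weighted Poincar\'e polynomials and relative Weyl groups of types $B$, $C$, $G_2$, $F_4$, $I_2(8)$ or rank one (all with $w_0=-1$), is also the correct way to complete the picture. In short: correct approach, honestly flagged gap, and more than the paper itself supplies.
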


	\section{Proof of Theorem \ref{main}}\label{s3}

 Throughout this section, we assume the following hypothesis.
		\begin{hypothesis}\label{hy_exceptional}
		Let $\mathcal{D}$ be a non-trivial quasi-symmetric $2$-design with block intersection numbers $x=0$ and $2\leq y\leq10$, let $G \leq \mathrm{Aut}(\mathcal{D})$ be flag-transitive  and point-primitive with socle $X$ a finite simple exceptional group of Lie type.  Let $\a$ be a point of $\mathcal{D}$ and $H=G_{\a}$.
	\end{hypothesis}

	Before proving the main theorem, we  introduce the following Computational methods through two examples.
	
	\subsection{Computational methods}\label{Comp}
	
	Our methods are based on   Lemma \ref{s22}(v) and Lemma \ref{s23}(ii)(iii). For  most of the large maximal non-parabolic subgroups listed in Table \ref{large_maximal}, we can use Lemma \ref{s23}(ii) to obtain  upper bounds on $\frac{r}{(r,\lambda)}$. Similarly, for parabolic subgroups, we  use Lemma  \ref{s23}(iii).  Combining these upper bounds with the lower bounds on $\frac{r}{(r,\lambda)}$ derived from Lemma \ref{s22}(v), we can deduce that  only a few small values of $q$ satisfy the conditions.
	\begin{example}\label{ex1}
		Suppose that $X=F_4(q)$ and $H\cap X=X_{\alpha}$ is a maximal subgroup of type $ {}^3D_4(q)$, where $q=p^f$ for some prime $p$ and positive integer $f$. According to \cite[Table 7]{Craven2023}, we know that $X_{\alpha}={}^3D_4(q).3$. Note that $|X|=q^{24}(q^2-1)(q^6-1)(q^8-1)(q^{12}-1)$ and $ |X_{\alpha}|=3q^{12}(q^8+q^4+1)(q^6-1)(q^{2}-1)$ (see \cite[Table 5.1.B]{Kleidman}). Then $v=|X|/|X_{\alpha}|=\frac{1}{3}q^{12}(q^8-1)(q^4-1)$. Next, we consider $3(v-1)$ and $|X_{\alpha}|$ as polynomials in $q$ over the rational field. Using the Magma \cite{Bosma} command $\mathbf{XGCD}$, we   obtain three rational polynomials $f(q)$, $g(q)$ and $h(q)$ such that
		\[
		f(q)\cdot 3(v-1)+g(q)\cdot|X_{\alpha}|=h(q).
		\]
		For this example, computation in Magma \cite{Bosma} shows that
		\[
		h(q)=q^8+q^4+1,
		\]
		\[
		f(q)=-\frac{5}{183}q^{18}+\frac{1}{183}q^{16}+\frac{13}{183}q^{14}-\frac{3}{61}q^{12}-\frac{1}{3}q^8-\frac{1}{3}q^4-\frac{1}{3},
		\]
		\[
		g(q)=\frac{5}{549}q^{14}+\frac{4}{549}q^{12}-\frac{19}{549}q^{10}-\frac{1}{183}q^{8}+\frac{22}{549}q^6+\frac{14}{183}q^4+\frac{73}{549}q^2+\frac{34}{549}.
		\]
		Notice that the least common multiple of the denominators of all coefficients of $f(q)$ and $g(q)$ is $549$. Then we deduce that $(3(v-1),|X_{\alpha}|)\leq 549(q^8+q^4+1)$. Since $|\mathrm{Out}(X)|=(2,p)f$, we have $\frac{r}{(r,\lambda)}\leq \left(v-1,|G_{\alpha}|\right)\leq\left(3(v-1),|X_{\alpha}|\cdot|\mathrm{Out}(X)|\right)\leq1098f\cdot (q^8+q^4+1)$ by Lemma \ref{s23}(ii) and Lemma \ref{stabilizer}.	Furthermore, from Lemma \ref{s22}(v), we deduce that $v\leq2(y-1)\cdot\frac{r^2}{(r,\lambda)^2}\leq 18\cdot \left(1098f\cdot (q^8+q^4+1)\right)^2$, i.e.,
		\begin{equation}
			\frac{1}{3}q^{12}(q^8-1)(q^4-1)\leq18\cdot \left(1098f\cdot (q^8+q^4+1)\right)^2. \nonumber
		\end{equation}
		This is an inequality on $q$, and one can show that it holds only for $q\leq 9$ by Magma \cite{Bosma}. Then for each prime power $q\leq 9$,  we  can compute the value $a:=(v-1,|X_{\alpha}|\cdot|\mathrm{Out}(X)|)$. Finally, we substitute the values $v,a$ back into the inequality $v\leq 18a^2$, and find that the inequality does not hold, leading to a contradiction.
	\end{example}
   For parabolic subgroups, we use similar processing method. 
   \begin{example}\label{ex2}
   	Suppose that $X=G_2(q)$ and $X_{\alpha}=[q^5]:\mathrm{GL}(2,q)$ is a parabolic subgroup, where $q=p^f$ for some prime $p$ and positive integer $f$. 	Note that $G_2(2)$ is not a simple group. Thus we assume that $q>2$. Then $v=|X|/|X_{\alpha}|=(q^6-1)/(q-1)$ and so $(v-1)_p=q$. By Lemma \ref{p-power}, there is a subdegree $d$ which is a power of $p$. Consequently, $(v-1,d)=q$. Furthermore, by Lemma \ref{s23}(iii) and Lemma \ref{s22}(v), we have $(q^6-1)/(q-1)=v\leq18\cdot \frac{r^2}{(r,\lambda)^2}\leq18q^2$   and so $q=2$, which is a contradiction. 
   \end{example}
   
   In \cite[Section 3.1]{Lu}, the authors  presented a computational method to search for positive integers $(v,b,r,k,\lambda)$ as potential parameters for the design $\mathcal{D}$. This method  is  convenient for calculating some small values of $v$, such as sporadic cases in Table \ref{large_maximal}. For completeness, we will repeat it in the following paragraph.
   
   Given the value of $v$, we use Magma \cite{Bosma} to search for positive integers $(v,b,r,k,\lambda)$ that satisfy the following conditions: 
	\begin{equation}\label{y}
		2\leq y\leq10,
	\end{equation}
	\begin{equation}\label{v}
		k<v<\frac{k^{2}-k}{y-1},
	\end{equation}
	\begin{equation}\label{r1}
		r(k-1)=\lambda(v-1),
	\end{equation}
	\begin{equation}\label{r2}
		(y-1)(r-1)=(k-1)(\lambda-1),
	\end{equation}
	\begin{equation}\label{divisible}
		y\mid k,~y\mid (r-\lambda),
	\end{equation}
	\begin{equation}\label{b}
		b=\frac{vr}{k}.\\
	\end{equation}
	We provide specific calculation steps:
	\begin{enumerate}
		\item [(1)] For a given $v$ and $y$ obtained from \eqref{y}, substitute $v$ and  $y$ into \eqref{v} to  obtain the range of positive integer $k$;
		\item [(2)] Substitute the above possible values of $k$ and $v$ into \eqref{r1} and \eqref{r2} to obtain all possible positive integer solutions for $(r,\lambda)$, where $r>k>\lambda>1$;
		\item [(3)] Check whether $(y,k,r,\lambda)$ satisfies the conditions in \eqref{divisible};
		\item [(4)] Substitute the positive integer $r$ and corresponding $v,k $ into \eqref{b} to check whether $b$ is a positive integer.
	\end{enumerate}
	Finally, we identify the parameters $(v,b,r,k,\lambda) $  that satisfy all above conditions  as potential  parameters for the design $\mathcal{D}$.

	\subsection{Maximal non-parabolic subgroups in Table \ref{large_maximal}}
	According to Lemma \ref{s23}(i), we have  $|H|^3>\lambda|G|$, which implies that $H$ is a large maximal subgroup of $G$. Therefore, by Lemma \ref{max}, the subgroup $H$ is either a parabolic subgroup or a subgroup such that $H\cap X=X_{\alpha}$ is listed in Table \ref{large_maximal}. We shall initially focus on the latter case.
	\begin{lemma}\label{non_parabolic}
		Assume Hypothesis \ref{hy_exceptional}. The pair $(X,X_{\alpha})$ does not belong to any of the cases  listed in Table \ref{large_maximal}, with the exception of the case $(G_2(q),A_2^{\pm}(q))$.
	\end{lemma}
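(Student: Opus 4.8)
The plan is to work through Table~\ref{large_maximal} case by case, eliminating every pair $(X,X_\alpha)$ except $(G_2(q),A_2^{\pm}(q))$, using the two-sided squeeze on $r/(r,\lambda)$ described in Section~\ref{Comp}. For each row of the table, let $q=p^f$ and record $v=|X|/|X_\alpha|$ from the order formulas in \cite[Table~5.1.A,B]{Kleidman}. The upper bound comes from Lemma~\ref{s23}(ii) and Lemma~\ref{stabilizer}: $\tfrac{r}{(r,\lambda)}\leq(v-1,|G_\alpha|)$ divides $(c(v-1),|X_\alpha|\cdot|\mathrm{Out}(X)|)$ for a suitable integer clearing factor $c$, and via the $\mathbf{XGCD}$ computation this is bounded by $N\cdot f\cdot P(q)$ where $P(q)$ is an explicit cyclotomic-type polynomial of small degree and $N$ an explicit constant. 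The lower bound is $v-1<2(y-1)\tfrac{r^2}{\lambda^2}\leq 18\,\tfrac{r^2}{(r,\lambda)^2}$ from Lemma~\ref{s22}(v). Combining gives a polynomial inequality $v(q)\leq 18\big(Nf P(q)\big)^2$; since $v(q)$ grows much faster than $P(q)^2$ (the exponent of $q$ in $v$ is the codimension of a large subgroup, which comfortably exceeds $4\deg P$ in every row), this forces $q$ to lie below an explicit small bound. For the finitely many surviving $(q,f)$ one computes $a:=(v-1,|X_\alpha|\cdot|\mathrm{Out}(X)|)$ exactly and checks that $v\leq 18a^2$ fails, exactly as in Example~\ref{ex1}.

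Next I would dispose of the finitely many sporadic entries in the table — the rows with $H\cap X$ a sporadic group, alternating group, or other fixed group (e.g.\ $\mathrm{J}_2$, $\mathrm{Fi}_{22}$, $\mathrm{A}_9$, $2^3.A_2(2)$, $7^2{:}\mathrm{SL}(2,3)$, and so on, all occurring only for one or two explicit values of $q$). For each such pair $v$ is a concrete integer, so I apply the parameter search of Section~\ref{Comp}: run the Magma search over $(v,b,r,k,\lambda)$ satisfying \eqref{y}–\eqref{b}, and for each candidate parameter set additionally test Lemma~\ref{s23}(i) $|G_\alpha|^3>\lambda|G|$ and the divisibility $r\mid\lambda(v-1,|G_\alpha|)$ together with $r\mid\lambda d$ for the known subdegrees $d$ of the primitive action of $G$ on cosets of $H$. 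In each case either no parameter set survives or the arithmetic divisibility condition is violated, giving a contradiction.

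The remaining work is the subfield and ``generic'' geometric subgroups: rows like $E_6^\epsilon(q^{1/3})$, ${}^3D_4(q^{1/2})$, $F_4(q^{1/r})$, $G_2(q^{1/r})$, etc. These behave slightly differently because $|X_\alpha|$ is roughly $|X|^{1/2}$ or $|X|^{1/3}$, so $v$ is itself a large power of $q$; here the $\mathbf{XGCD}$ bound on $(v-1,|X_\alpha|)$ is even more favourable and the inequality $v\leq 18(NfP(q))^2$ again kills all but tiny $q$, which are then ruled out by direct computation. I would also handle the genuinely ``tight'' rows — those where $|X_\alpha|$ is largest relative to $|X|$, notably $E_n$-type with $X_\alpha$ of type $A_1(q)E_7(q)$, $A_1(q)A_5^\epsilon(q)$, $(q-\epsilon)E_6^\epsilon(q)$, and the $G_2(q)$ row with $X_\alpha=A_1(q)^2$ or ${}^2G_2(q)$ or $A_2^\pm(q)$. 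The case $(G_2(q),A_2^\pm(q))$ is exactly the one expected to survive and is deliberately excluded from the statement, so the delicate point — and the main obstacle — is confirming that for $X_\alpha$ of type $A_1(q)^2$ inside $G_2(q)$ (and the analogous borderline entries in $E_7$, $E_8$), the gap between the $\mathrm{XGCD}$-upper bound and the Lemma~\ref{s22}(v) lower bound is still wide enough to force a contradiction after a short finite check; this requires choosing the clearing factor $c$ and reading off the denominator lcm carefully, since a naive bound may be off by a bounded factor. Once these tight rows are cleared, the lemma follows.
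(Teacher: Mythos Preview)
Your proposal is correct and follows essentially the same route as the paper's own proof: for every row of Table~\ref{large_maximal} you bound $\tfrac{r}{(r,\lambda)}$ above via the $\mathbf{XGCD}$ trick on $(v-1,|X_\alpha|\cdot|\mathrm{Out}(X)|)$ (Lemma~\ref{s23}(ii) and Lemma~\ref{stabilizer}) and below via $v\leq 18\,\tfrac{r^2}{(r,\lambda)^2}$ (Lemma~\ref{s22}(v)), reduce to finitely many $q$, and finish with the explicit parameter search of Section~\ref{Comp}. The paper does exactly this, noting that the inequality $v\leq 18\bigl(h(q)\,|\mathrm{Out}(X)|\bigr)^2$ fails to bound $q$ only for $(G_2(q),A_2^{\pm}(q))$, where $\deg_q v = 6 = 2\deg_q h$; your remark that the exponent of $q$ in $v$ ``comfortably exceeds $4\deg P$'' should read $2\deg P$, and the equality in that one row is precisely why it is the surviving case. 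Your additional subdegree checks for the sporadic entries are not needed in the paper's argument but do no harm.
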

	\begin{proof}
		For each of the candidate pairs $(X,X_{\alpha})$ without specific $q$ value conditions, we can calculate the parameters $v-1$ and $|X_{\alpha}|$. By  using the method
		in Section \ref{Comp}, the computation in Magma \cite{Bosma} gives three polynomials  with integer coefficients $f(q)$, $g(q)$ and $h(q)$ such that
		\[
		f(q)\cdot(v-1)+g(q)\cdot|X_{\alpha}|=h(q).
		\]
		According to Lemma \ref{s23}(ii) and Lemma \ref{stabilizer}, we have $\frac{r}{(r,\lambda)}\leq(v-1,|G_{\alpha}|)\leq h(q)\cdot |\mathrm{Out}(X)|$. It follows from Lemma \ref{s22}(v) that
		\begin{equation}\label{v_bound}
			v\leq2(y-1)\cdot\frac{r^2}{(r,\lambda)^2}\leq	18\cdot\left(h(q)\cdot |\mathrm{Out}(X)|\right)^2.
		\end{equation}
		This inequality \eqref{v_bound} is on $q$, and after computation,  we find that only a few small values of $q$ satisfy this inequality. Furthermore, for these $q$ values, and the specific remaining $q$ values listed in Table \ref{large_maximal}, we  directly verify whether inequality $v\leq18\cdot(v-1,|X_{\alpha}|\cdot|\mathrm{Out}(X)|)^2$ holds. We provide Example \ref{ex1} to demonstrate these calculation processes. Finally, for the  $q$ values that satisfy   above inequality,  we  search for potential parameters for $\mathcal{D}$. It is worth noting that  most  cases are excluded in the second step, and  no possible parameters are found  in the final calculation. The only remaining case is that $(G_2(q),A_2^{\pm}(q))$. In this case, we have $v=\frac{1}{2}q^3(q^3\pm1)$ and $h(q)=12(q^3\mp1)$. Then inequality \eqref{v_bound} becomes 
		$\frac{1}{2}q^3(q^3\pm1)\leq18\cdot\left(24f(q^3\mp1)\right)^2$ since $|\mathrm{Out}(X)|\leq 2f$, where $q=p^f$ for some prime $p$ and positive integer $f$. This is always true.
	\end{proof}

\begin{lemma}\label{A_2}
	Assume Hypothesis \ref{hy_exceptional}. If $X=G_2(q)$ with $q\geq3$, then  $X_{\alpha}$ is not a maximal subgroup of type $A_2^{\pm}(q)$.
\end{lemma}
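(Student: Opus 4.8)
The plan is to dispose of the single case that escaped Lemma~\ref{non_parabolic} by squeezing $r/(r,\lambda)$ from above \emph{and} below more tightly than the generic estimate permits. Recall from the proof of Lemma~\ref{non_parabolic} that here $v=\tfrac12 q^{3}(q^{3}+\epsilon)$ with $\epsilon=\pm1$, so
$$v-1=\tfrac12(q^{3}-1)(q^{3}+2)\ \ (\epsilon=1),\qquad v-1=\tfrac12(q^{3}+1)(q^{3}-2)\ \ (\epsilon=-1);$$
since $q\ge3$ both displayed factors are genuine, they are coprime up to a divisor of $3$, and each equals $q^{3}\bigl(1+O(q^{-3})\bigr)$. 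The obstruction noted in Lemma~\ref{non_parabolic} is that $(v-1,|G_\alpha|)$ has size $\Theta(q^{3})=\Theta(\sqrt v)$, so the inequality $v\le 18\,(v-1,|G_\alpha|)^{2}$ coming from Lemma~\ref{s22}(v) and Lemma~\ref{s23}(ii) is vacuous. The idea is that the arithmetic of $v-1$ leaves very little room for a divisor of that size once the full strength of Lemma~\ref{s22}(v) (the \emph{two-sided} bound, not just the weak upper one) is combined with the design identities.

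Write $n=r/(r,\lambda)$, $m=\lambda/(r,\lambda)$ (so $\gcd(n,m)=1$), $t=(v-1)/n\in\mathbb{Z}$ and $k-1=mt$. By Lemma~\ref{s23}(ii), Lemma~\ref{stabilizer}, and the $\mathbf{XGCD}$ output recorded in the proof of Lemma~\ref{non_parabolic} (where $h(q)=12(q^{3}-\epsilon)$), one gets $n\mid(v-1,|G_\alpha|)$ and $(v-1,|G_\alpha|)\mid 12\,|\mathrm{Out}(X)|\,(q^{3}-\epsilon)$, hence $n\le 24f(q^{3}-\epsilon)$ with $q=p^{f}$. On the other hand Lemma~\ref{s22}(v) with $2\le y\le 10$ gives $n>m\sqrt{(v-1)/18}>mq^{3}/8$ for $q\ge 3$. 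Comparing the two bounds forces $m$ to be bounded by an absolute multiple of $f$. Moreover, rearranging the identity $(y-1)(r-1)=(k-1)(\lambda-1)$ of Lemma~\ref{s22}(i) into
$$m(gm-1)(v-1)=(y-1)\,n(gn-1),\qquad g:=(r,\lambda),$$
and comparing leading terms (using $v-1\asymp q^{6}$ and the above bounds) pins $k-1=mt$, and hence $n$ and $t$, into bounded-width windows around explicit functions of $q$; in particular, for each $q$ the triple $(m,n,t)$ ranges over a short explicit list (essentially $n=(q^{3}-\epsilon')/a$ or $(q^{3}+\epsilon'')/a$ for a few small $a$, up to bounded factors arising from the divisor $3$), and the cases $\epsilon=1$ and $\epsilon=-1$ are treated separately.

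To finish, for each admissible $(m,n,t,g,y)$ the relation $m(gm-1)(v-1)=(y-1)n(gn-1)$ becomes, after substituting the explicit $n$ and $t$ as functions of $q$, a polynomial identity in $q$ and $g$; together with positivity and integrality of $g=(r,\lambda)$, the divisibilities $y\mid k$ and $y\mid r-\lambda$ from Lemma~\ref{s22}(iii), and $b=vr/k\in\mathbb{Z}$ from Lemma~\ref{s21}(ii), this leaves only finitely many small prime powers $q$. Those remaining values are cleared by the parameter search of Section~\ref{Comp} applied to each such $v$: no quintuple satisfying \eqref{y}--\eqref{b} arises. For instance, for $q=3$, $\epsilon=1$ one has $v=378$, and dropping \eqref{b} temporarily the conditions \eqref{y}--\eqref{divisible} admit only $(r,k,\lambda)=(117,30,9)$ with $y=3$; but then $b=vr/k=378\cdot117/30\notin\mathbb{Z}$, contradicting \eqref{b}.

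The main obstacle is the middle step: upgrading the coarse size inequalities into the statement that $(m,n,t)$ ranges over an explicit finite set. This requires a careful description of the divisors of $v-1=\tfrac12(q^{3}-\epsilon')(q^{3}+\epsilon'')$ lying in a window of width $O(q^{3})$ about $q^{3}$ — one must in particular allow divisors that combine a moderate factor of one of the two near-equal terms with a moderate factor of the other — and then handling the several divisibility and integrality conditions simultaneously. None of this is conceptually deep, but it carries essentially all the technical weight of the lemma, and the two sign cases genuinely need separate bookkeeping.
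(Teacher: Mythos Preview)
Your outline has a genuine gap precisely at the place you flag as ``the main obstacle.'' With only the XGCD bound $n\mid 12\,|\mathrm{Out}(X)|\,(q^{3}-\epsilon)$ you obtain $m\le C\cdot f$ and $n\le 24f(q^{3}-\epsilon)$, where $f=\log_p q$ is unbounded. The subsequent assertion that ``for each $q$ the triple $(m,n,t)$ ranges over a short explicit list'' and that the polynomial identity then ``leaves only finitely many small prime powers $q$'' is not justified: for growing $f$ the number of admissible $m$, and for each $m$ the number of divisors $n$ of $v-1$ in the window $[q^{3}/8,\,24f(q^{3}-\epsilon)]$, both grow, and you give no mechanism that collapses this two-parameter family of Diophantine constraints to a bounded one. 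Saying the step is ``not conceptually deep'' does not supply the missing argument; as written, the proof is an outline with its central computation omitted.

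The paper closes this gap by bringing in one extra ingredient you do not use: subdegrees, via Lemma~\ref{s23}(iii). Using the factorisation $\Omega(7,q)=G_2(q)N_1^{\epsilon}$ and the explicit $\Omega(7,q)$-subdegrees recorded in \cite[Section~3, Case~8 and Section~4, Case~1]{Saxl2002}, one finds a $G_2(q)$-subdegree $d$ with $(v-1,d)\mid \tfrac{1}{2}(q^{3}-\epsilon)$ for $q$ odd (and the analogous bounds for $q$ even). Hence $r/(r,\lambda)\mid \tfrac{1}{2}(q^{3}-\epsilon)$, with no $f$-factor. Writing $r/\lambda=(q^{3}-\epsilon)/(2u)$ and feeding this into the two-sided inequality of Lemma~\ref{s22}(v) forces $u\in\{1,2\}$ for all $q$; then $k-1=(v-1)\lambda/r$ is an explicit polynomial in $q$, the relation $(y-1)(r-1)=(k-1)(\lambda-1)$ pins down $y$ and $\lambda$, and the integrality of $b=vr/k$ (together with $y\mid k$) kills each of the finitely many surviving branches uniformly in $q$. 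This is exactly the sharpening your XGCD bound lacks, and it is what turns the heuristic ``window'' argument into an actual finite case analysis.
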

\begin{proof}
	Suppose to the contrary that $X_{\alpha}$ is of type  $A_2^{\pm}(q)$. According to  \cite[Table 4.1]{Wilson2009}, we know that $X_{\alpha}=\mathrm{SL}(3,q):2$ or $\mathrm{SU}(3,q):2$. Note that $|X|=q^6(q^6-1)(q^2-1)$. Then $v=\frac{1}{2}q^3(q^3+\epsilon)$, where $\epsilon=\pm1$. Combining Lemma \ref{s23}(ii)(iii) with \cite[Section 4, Case 1 and Section 3, Case 8]{Saxl2002}, we conclude that $\frac{r}{(r,\lambda)}$ divides $\frac{1}{2}(q^3-1)$ for $q$ odd and  $\frac{r}{(r,\lambda)}$ divides $q^3-1$ for $q$ even if $\epsilon=+1$; $\frac{r}{(r,\lambda)}$ divides $\frac{1}{2}(q^3+1)$ for $q$ odd and  $\frac{r}{(r,\lambda)}$ divides $3(q^3+1)$ for $q$ even if $\epsilon=-1$. For example, suppose that $q$ is odd. From the factorization $\Omega(7,q)=G_2(q)N^{\epsilon}_1$, it follows that the suborbits of $\Omega(7,q)$ are unions of $G_2(q)$-suborbits. The $\Omega(7,q)$-subdegree are $q^6-1$, $\frac{1}{2}q^2(q^3-\epsilon)$ and $\frac{1}{2}(q-3)$ times $q^2(q^3-\epsilon)$. Since $\gcd(v-1,q)=1$,  by Lemma \ref{s23}(iii), we have $\frac{r}{(r,\lambda)}$ divides $\frac{1}{2}(q^3-\epsilon)$.
	
	We  give the details only for case where $\epsilon=+1$ and $q$ is odd, since  other cases are similar. Since $\frac{r}{(r,\lambda)}$ divides $\frac{1}{2}(q^3-1)$, we have $\frac{r}{\lambda}=\frac{r}{(r,\lambda)}/\frac{\lambda}{(r,\lambda)}=\frac{1}{2u}(q^3-1)$, where $u$ is a positive integer. Suppose that $u\geq3$.  Since $y\leq10$, it follows from Lemma \ref{s22}(v) that $v-1=\frac{1}{2}(q^6+q^3-2)<2(y-1)\cdot\frac{(q^3-1)^2}{4u^2}\leq\frac{1}{2}(q^6-2q^3+1)$, which is a contradiction. Thus we have $u=1$ or $2$.
	
 If $u=1$, then by Lemma \ref{s22}(v), we have $\frac{1}{4}(y-1)(q^3-1)^2<v-1=\frac{1}{2}(q^3+2)(q^3-1)<\frac{1}{2}(y-1)(q^3-1)^2$, so $y=3$. Recall that $k=\frac{\lambda(v-1)}{r}+1$. Then we deduce that $k=q^3+3$. Furthermore, by Lemma \ref{s22}(i), i.e., $(y-1)(r-1)=(k-1)(\lambda-1)$, we  compute that $\lambda=\frac{1}{3}q^3$ and $r=\frac{1}{6}q^3(q^3-1)$. Hence $b=\frac{vr}{k}=\frac{q^6(q^6-1)}{12(q^3+3)}$.  Since $b$ is a positive integer, it follows that $q^3+3$ divides $72$,  which is impossible.
	
 If $u=2$, then $k=2q^3+5$. Furthermore, we can similarly deduce that $6\leq y\leq 9$  by   Lemma \ref{s22}(v). We give details only for the case $y=6$, as other cases are similar. For $y=6$,  by Lemma \ref{s22}(i), we obtain that $\lambda=\frac{4(2q^3-1)}{3(q^3+7)}<3$, which contradicts the fact  that  $y< \lambda$ from Lemma \ref{s22}(ii).	This completes the proof.	
\end{proof}

	\subsection{Maximal parabolic subgroups}
	
	In this subsection, we deal with the case $H=G_{\alpha}$ is a maximal parabolic subgroup.
	\begin{lemma}\label{suzuki}
		Assume Hypothesis \ref{hy_exceptional}. If $X={ }^2B_2(q)$ with $q=2^{2n+1}\geq8$, then $X_{\alpha}$ is not  a maximal parabolic subgroup $[q^2]:(q-1)$.
	\end{lemma}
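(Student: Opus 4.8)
The plan hinges on computing $v$. Since $X_\alpha=[q^2]:(q-1)$ has order $q^2(q-1)$, we get $v=|X|/|X_\alpha|=q^2+1$, so $v-1=q^2$. As $X$ is $2$-transitive on these $q^2+1$ points, the only non-trivial subdegree is $q^2$, and since $\gcd(v-1,|G_\alpha|)=q^2$ as well, Lemmas \ref{s23}(ii) and \ref{s23}(iii) only give $\frac{r}{(r,\lambda)}\mid q^2$, which is automatic. Thus, in contrast with Example \ref{ex2}, the subdegree that is a power of $p$ gives nothing, and one must work directly with the numerical identities while exploiting that $v-1$ is a $2$-power.

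First I would eliminate $r$ and $\lambda$. Put $D:=(k-1)^2-(y-1)q^2$. Combining $r(k-1)=\lambda(v-1)=\lambda q^2$ (Lemma \ref{s21}(i)) with Lemma \ref{s22}(i) gives, after a short manipulation,
\[rD=(k-y)q^2,\qquad \lambda D=(k-1)(k-y).\]
As $r$ is a positive integer and $k>y$ (from $y<\lambda\le k-1$), the first relation forces $D\ge 1$, and then $r=(k-y)q^2/D$, $\lambda=(k-1)(k-y)/D$. Now $\lambda>y$ is equivalent to $k(k-1)<yq^2$; with Lemma \ref{s22}(iv) ($k(k-1)>(y-1)(q^2+1)$) and $D\ge 1$ this confines $k-1$ to $\sqrt{y-1}\,q<k-1<\sqrt{y}\,q$ and yields $0<D<q^2$, while $\lambda\le k-1$ gives $D\ge k-y$. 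Finally $D$ divides both $(k-y)q^2$ and $(k-1)(k-y)$, hence $D\mid(k-y)\gcd(q^2,k-1)=(k-y)(k-1)_2$.

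The decisive step is $2$-adic. Using $0<D<q^2$ and the window for $k-1$ one checks $q\nmid k-1$ (the only multiples of $q$ in that window would be $2q,3q$, and these force $D\ge q^2$), so the $2$-part of $(k-1)^2$ is smaller than that of $(y-1)q^2$ and the $2$-part of $D$ equals $(k-1)_2^{\,2}$; integrality of $\lambda$ then forces $(k-1)_2\mid(y-1)_2$, whence $(k-1)_2\le 8$. Combining this with $D\mid(k-y)(k-1)_2$ and $D\ge k-y$, the common ratio $s:=(k-y)/D=\lambda/(k-1)=r/q^2$ equals, in lowest terms, a fraction $p_1/p_2$ with $p_1\mid k-y$, $p_2\mid(k-1)_2\le 8$ and $p_1\le p_2$; hence $\lambda=(k-1)s$ and $r=q^2s$ are completely explicit, and only finitely many pairs $(p_1,p_2)$ can occur.

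To close each case I would use Lemma \ref{s22}(iii): $y\mid k$ and $y\mid r-\lambda=(q^2-k+1)s$. After clearing the denominator $p_2$, the second condition becomes a congruence $q^2\equiv c\pmod{M}$ with $M\mid p_2y$ and $c$ determined (via $y\mid k$ and $p_2\mid k-1$) by $p_1,p_2,y$. Since $q=2^{2n+1}$, we have $q^2=4\cdot 16^{n}$, whose residue modulo $M$ runs over a short cyclic list, and one checks that $c$ never appears there — a contradiction. The one exception is $y=5$, the only $y\le 10$ with $q^2\equiv-1\pmod y$; there the $2$-adic step leaves only $D\in\{k-5,\,2(k-5)\}$, and since $y-1=4$ is a perfect square Lemma \ref{s22}(i) reduces to $(2k-3)^2=16q^2-15$ or $(k-2)^2=4q^2-7$, i.e. a difference of two squares equal to $15$ or $7$ whose larger factor exceeds $2q\ge 16$ — impossible. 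The main obstacle is the bookkeeping of this last step: once $(k-1)_2\le 8$ is in hand, each case collapses to an elementary congruence or (for $y=5$) a one-line factorisation, but one must set up the reduction so that all admissible $(p_1,p_2)$, and all $y\in\{2,\dots,10\}$, are covered uniformly.
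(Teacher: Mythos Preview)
Your approach is correct and takes a genuinely different route from the paper's. Both proofs open with the $2$-adic observation $(k-1)_2\le(y-1)_2$, but then diverge. The paper writes $r=q^2m/a$, $\lambda=m(k-1)/a$ with $a=(y-1)_2$; for $a=1$ it invokes the external classification of Zhang--Zhou for flag-transitive designs with $(r,\lambda)=1$ and exceptional socle, and for $a\ge2$ it shows $m$ is even with $m<a$ and then uses the divisibility $k\mid q^2+1$ (coming from $b=vr/k\in\mathbb Z$) to force $k$ to divide a small constant depending on $(y,m)$. You avoid the external citation altogether: the condition $y\mid r-\lambda=s(q^2-k+1)$ from Lemma~\ref{s22}(iii), combined with $y\mid k$ and $p_2\mid k-1$, reduces every case with $y\neq5$ to a congruence $q^2\equiv-1\pmod{y'}$ for some divisor $y'>1$ of $y$, which fails because $4^{2n+1}$ is never $-1$ modulo $2,3,4,6,7,8,9$ or $10$. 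Only $y=5$ survives (since indeed $4^{2n+1}\equiv-1\pmod5$), and there your difference-of-squares factorisation is more elementary than the paper's reduction to $k\mid 15$. One small correction: where you write ``these force $D\ge q^2$'' the actual conclusion is $D\le0$; in fact no multiple of $q$ lies in the open interval $(\sqrt{y-1}\,q,\sqrt{y}\,q)$ at all, since no integer $m$ satisfies $y-1<m^2<y$. The trade-off is that the paper's argument is shorter once Zhang--Zhou is granted, whereas yours is fully self-contained and makes the number theory transparent.
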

	\begin{proof}
		Since $|X|=q^2(q^2+1)(q-1)$, it follows from the point-transitivity of $X$ that $v=|X|/|X_{\alpha}|=q^2+1$. Then, by Lemma \ref{s23}(ii), we have $r\mid \lambda q^2$. Note that $r>\lambda$ and $q=2^{2n+1}$. Then we deduce that $r$ is even. Recall that $(y-1)(r-1)=(k-1)(\lambda-1)$ by Lemma \ref{s22}(i). Therefore, $(k-1)_2\leq(y-1)_2$. We write that  $(y-1)_2=a$, where $a=1,2,4$ or $8$ since $y\leq10$. 
		
		Suppose that $a=1$. Then we have $(k-1,q^2)=1$. It follows from Lemma \ref{s21}(i) that $\frac{r}{\lambda}=\frac{v-1}{k-1}=\frac{q^2}{k-1}$. Therefore, $r\geq q^2=v-1$, which implies that $r=v-1=q^2$ by Lemma \ref{s22}(ii). Moreover, we have $\lambda=k-1$ and so $(r,\lambda)=1$. From \cite[Theorem 1]{Zhang2020} (see also \cite[Theorem 1]{Alavi2020}), we know that $k=q$ and $\lambda=q-1$. It follows from Lemma \ref{s22}(iii) that $y\mid q$ and  $y\mid (q^2-q+1)$, which is impossible. Therefore, we can always assume that $(r,\lambda)>1$.
		
	Form now on, we suppose that $a\geq2$. Note that we have $(k-1,q^2)\mid a$. From $\frac{r}{\lambda}=\frac{q^2}{k-1}$,  there is an integer $m$ such that $r=\frac{q^2m}{a}$ and $\lambda=\frac{m(k-1)}{a}$. We claim that $m$ is even and $m<a$. Suppose to the contrary that $m$ is odd. Then $(k-1)_2=a=(y-1)_2$ and so $\lambda_2=\left(\frac{m(k-1)}{a}\right)_2=1$, i.e., $\lambda$ is odd. Thus we   conclude that $\left((k-1)(\lambda-1)\right)_2>\left((y-1)(r-1)\right)_2$, which contradicts   Lemma \ref{s22}(ii). If $m=a$, then  $r=q^2$ and $\lambda=k-1$. It follows from $(r,\lambda)>1$ that $k$ is odd.  Note that $b=\frac{vr}{k}=\frac{q^2(q^2+1)}{k}$ is an integer. Then we know that $k\mid (q^2+1)$. By solving the following system of equations:
		\begin{equation}\label{equa1}
			r(k-1)=\lambda(v-1),
			\nonumber
		\end{equation}
		\begin{equation}\label{equa2}
			(y-1)(r-1)=(k-1)(\lambda-1),\nonumber
		\end{equation}
		we get that $\lambda=\frac{(k-1)(k-y)}{(k-1)^2-(y-1)q^2}$. Since $\lambda=k-1$, we have $k-y=(k-1)^2-(y-1)q^2$, i.e., $k^2-3k+(y+1)-(y-1)q^2=0$. It follows from $k\mid (q^2+1)$ that $k$ divides $k^2-3k+(y+1)-(y-1)q^2+(y-1)q^2+(y-1)=k^2-3k+2y$. Then $k\mid 2y$ and so $k\leq y$, which contradicts Lemma \ref{s22}(ii). Therefore, since $m$ is even and $m<a$, we know that $r$ is a power of $2$ that is less than $q^2$. Moreover, from $\lambda=\frac{m(k-1)}{a}$, we know that $k$ is odd. Thus $k$ divides $q^2+1$.  Next, we only need to analyze the possible values of $m$ one by one  when $a$ equals $4$ or $8$.
		
		Suppose that $a=4$. Then $y=5$. Let $r=\frac{q^2m}{4}$ and $\lambda=\frac{m(k-1)}{4}$, where $m$ is  an even integer. Based on the previous proof, we   conclude that $m=2$. From   $\frac{k-1}{2}=\lambda=\frac{(k-1)(k-5)}{(k-1)^2-4q^2}$,  we deduce that $k^2-4k+11-4q^2=0$. Combining this with the fact that $k\mid (q^2+1)$, we   conclude that $k\mid 15$ and so $k=15$. It follows that $q^2=44$, which is impossible.	Suppose that $a=8$. Then $y=9$. Similarly, we can suppose that  $r=\frac{q^2m}{8}$ and $\lambda=\frac{m(k-1)}{8}$, where $m=2,4$ or $6$.  From $\frac{m(k-1)}{8}=\lambda=\frac{(k-1)(k-9)}{(k-1)^2-8q^2}$, we   compute that $mk^2-(2m+8)k+m+72-8mq^2=0$. It follows from $k$ dividing $q^2+1$ that $k\mid (72+9m)$. Thus we have $k\mid 45$, $k\mid 27$ and $k\mid 63$  for $m=2,4$ and $6$ respectively. Then, considering the conditions $y\mid k$ and $y\neq k$, we deduce that $k=45,27,63$ respectively. However,  we can directly verify that there are no prime powers $q$ that satisfy the above equations for these $k$ values. This completes the proof.
	\end{proof}
	
		\begin{lemma}\label{G2}
	Assume Hypothesis \ref{hy_exceptional}.	If $X={ }^2G_2(q)$ with $q=3^{2n+1}\geq27$, then $X_{\alpha}$ is not  a maximal parabolic subgroup $[q^3]:(q-1)$.
	\end{lemma}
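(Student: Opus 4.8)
The plan is to follow the pattern of Lemma~\ref{suzuki}. Under Hypothesis~\ref{hy_exceptional}, from $|X|=q^{3}(q^{3}+1)(q-1)$ and $|X_{\alpha}|=q^{3}(q-1)$ together with point-transitivity we get $v=q^{3}+1$, hence $v-1=q^{3}=3^{3f}$, where $q=3^{f}$ with $f=2n+1\geq 3$ odd. Since $|\mathrm{Out}(X)|=f$ and $3\nmid q-1$, Lemma~\ref{stabilizer} gives that $(v-1,|G_{\alpha}|)$ divides $\gcd(3^{3f},f\cdot 3^{3f}(q-1))=3^{3f}$, a power of $3$; so Lemma~\ref{s23}(ii) forces $\frac{r}{(r,\lambda)}$ to be a power of $3$, say $\frac{r}{(r,\lambda)}=3^{s}$ with $1\leq s\leq 3f$ (here $s\geq 1$ because $r>\lambda$). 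Writing $\lambda'=\lambda/(r,\lambda)$, the identity $r(k-1)=\lambda(v-1)$ then gives $k-1=\lambda'\cdot 3^{3f-s}$ with $3\nmid\lambda'$.

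The decisive step is to compare the largest powers of $3$ dividing the two sides of $(y-1)(r-1)=(k-1)(\lambda-1)$ (Lemma~\ref{s22}(i)). Since $3\mid r$, the factor $r-1$ is coprime to $3$, so $3^{3f-s}$ divides $y-1$; as $y-1\leq 9$ this forces $3f-s\leq 2$, i.e. $s\in\{3f,\,3f-1,\,3f-2\}$ (with $s=3f-1$ forcing $3\mid y-1$ and $s=3f-2$ forcing $y=10$). If $s=3f$ then $k-1=\lambda'$, and together with $\lambda\leq k-1$ from Lemma~\ref{s22}(ii) this forces $(r,\lambda)=1$, so $r=v-1=q^{3}$ and $k=\lambda+1$. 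If $s=3f-1$ or $s=3f-2$ then $k-1=3\lambda'$ or $k-1=9\lambda'$; here $\lambda\leq k-1$ bounds $(r,\lambda)$ by $3$, resp. $9$, and since $3\nmid k$, integrality of $b=vr/k$ yields $k\mid (r,\lambda)(q^{3}+1)$ (when $s=3f$, integrality of $b$ instead gives $k\mid q^{3}(q^{3}+1)$).

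For each of the three cases I would reduce $(y-1)(r-1)=(k-1)(\lambda-1)$ modulo $k$, using $\lambda\equiv-1$, resp. $3\lambda'\equiv-1$, resp. $9\lambda'\equiv-1\pmod{k}$, feed in the information about $q^{3}$ modulo $k$ just obtained, and cancel, arriving at a divisibility $k\mid c$ with $c$ explicit and at most $220$: $k\mid 2y(y+1)$ when $s=3f$, $k\mid y((r,\lambda)+3)$ when $s=3f-1$, and $k\mid 10((r,\lambda)+9)$ when $s=3f-2$. On the other hand Lemma~\ref{s22}(iv) gives $k>\sqrt{v}>q^{3/2}$, so $q^{3/2}<220$ and hence $q=27$. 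For $q=27$ we have $k>27^{3/2}>140$, whence: if $s=3f-1$ then $k\leq 60$, impossible; if $s=3f-2$ then $k$ (a divisor of $c\leq 180$ exceeding $c/2$) equals $10((r,\lambda)+9)\in\{150,160,170,180\}$, none of which is $\equiv 1\pmod 9$, contradicting $k=9\lambda'+1$; and if $s=3f$ then $k$ equals $2y(y+1)\in\{144,180,220\}$, but direct substitution shows $(k-1)(k-2)=(y-1)(q^{3}-1)$ has no solution. This proves the lemma.

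I expect the only genuine difficulty to be organisational: arranging the three cases so that the reduction modulo $k$ really eliminates $q$ and leaves a $q$-free bound on $k$. Once that is set up, the inequality $k>q^{3/2}$ collapses everything to $q=27$, and the handful of residual possibilities are dispatched by inspection.
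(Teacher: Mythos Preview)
Your plan is correct and the arithmetic checks out: the $3$-adic comparison of the two sides of $(y-1)(r-1)=(k-1)(\lambda-1)$ forces $3f-s\le 2$; in each of the three cases the congruences $3^{3f-s}\lambda'\equiv -1\pmod k$ and $(r,\lambda)q^{3}\equiv -(r,\lambda)\pmod k$ (the latter coming from $b\in\mathbb Z$, using $3\nmid k$ when $s<3f$, and for $s=3f$ combined via $(y-1)^{2}q^{3}(q^{3}+1)\equiv 0$) do yield precisely $k\mid 2y(y+1)$, $k\mid y((r,\lambda)+3)$, $k\mid 10((r,\lambda)+9)$; and the bound $k>\sqrt{v}>q^{3/2}$ from Lemma~\ref{s22}(iv) then pins $q=27$, after which your residual checks are valid.

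This is a genuinely different route from the paper's. The paper splits according to $a=(y-1)_{3}\in\{1,3,9\}$, invokes the classification result of Zhang--Zhou \cite{Zhang2020} to handle the $(r,\lambda)=1$ subcase, and for $a\ge 3$ runs through the individual pairs $(y,m)$, each time solving for $q$ from an explicit quadratic in $k$ and checking no prime power arises. Your argument is more self-contained---it avoids the external \cite{Zhang2020} input entirely---and more uniform, since the same reduction modulo $k$ disposes of all three cases at once; the price is that you need the extra inequality $k>\sqrt{v}$ to collapse to $q=27$. (Incidentally, using the sharper $k>\sqrt{(y-1)v}$ from Lemma~\ref{s22}(iv) would let you skip the final substitutions: for $q=27$ and $y\ge 8$ it already gives $k>371$, exceeding all your upper bounds.) The paper's approach, by contrast, never needs to localise to a single $q$, but pays for this with heavier casework and the dependence on \cite{Zhang2020}.
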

	\begin{proof}
	Our argument here is similar to that in the proof of Lemma \ref{suzuki}. 	Since $|X|=q^3(q^3+1)(q-1)$, it follows from the point-transitivity of $X$ that $v=|X|/|X_{\alpha}|=q^3+1$.  Then by Lemma \ref{s23}(ii), we have $r\mid \lambda q^3$. Note that $r>\lambda$ and $q=3^{2n+1}$. Then we deduce that $r\equiv 0\pmod{3}$.
	Recall from Lemma \ref{s22}(i) that $(y-1)(r-1)=(k-1)(\lambda-1)$. Therefore, $(k-1)_3\leq(y-1)_3$. We write that  $(y-1)_3=a$, where $a=1,3$ or $9$ since $y\leq10$. 
	
	Suppose that $a=1$. Then we have $(k-1,q^3)=1$. It follows from Lemma \ref{s21}(i) that $\frac{r}{\lambda}=\frac{v-1}{k-1}=\frac{q^3}{k-1}$. Therefore, $r\geq q^3=v-1$ which implies that $r=v-1=q^3$ by Lemma \ref{s22}(ii). Moreover, we have  $\lambda=k-1$ and so $(r,\lambda)=1$. From \cite[Theorem 1]{Zhang2020} (see also \cite[Theorem 1]{Alavi2020}), we know that $k=q,\lambda=q-1$ or $k=q^2,\lambda=q^2-1$. It follows from $y\mid (r-\lambda)$ that $y\mid (q^3-q+1)$ or  $y\mid (q^3-q^2+1)$ by Lemma \ref{s22}(ii), which contradicts  that $y\mid k$. Therefore, we can always assume that $(r,\lambda)>1$ and so $(k,q^3)=1$.
	
	From now on, we suppose that $a\geq3$. Let $r=\frac{q^3m}{a}$ and $\lambda=\frac{m(k-1)}{a}$ for some integer $m$. Suppose that $m=a$. Then $r=q^3$ and $\lambda=k-1$. Similar to the proof of Lemma \ref{suzuki}, we can deduce that $k\mid 2y$. Note that $k\equiv1\pmod{3}$ since  $(r,\lambda)>1$. Then  we get $k=y$ since $y\equiv1\pmod{3}$, which contradicts  Lemma \ref{s22}(ii).
	
	Suppose that $a=3$. Then $y=4$ or $7$. Let $r=\frac{q^3m}{3}$ and $\lambda=\frac{m(k-1)}{3}$, where $m=1$ or $2$. We  give the details only for the case where $y=4$ and $m=1$, as other cases follow similarly. From $\frac{k-1}{3}=\lambda=\frac{(k-1)(k-4)}{(k-1)^2-3q^3}$,  we   deduce that $k^2-5k+13-3q^3=0$. Note that $b=\frac{vr}{k}=\frac{q^3(q^3+1)}{k}$ is an integer. Then $k\mid (q^3+1)$, and so we conclude that $k\mid 16$.  Recall that $y=4$ divides $k$ and $\lambda=\frac{k-1}{3}$ is an integer. Thus we must have $k=16$. It follows that $q^3=16^2-5\cdot16+13=189$, which is impossible. Suppose that $a=9$. Then $y=10$. Similarly, we can suppose that  $r=\frac{q^2m}{9}$ and $\lambda=\frac{m(k-1)}{9}$, where $m\le8$.  From $\frac{m(k-1)}{9}=\lambda=\frac{(k-1)(k-10)}{(k-1)^2-9q^3}$, we   compute that $mk^2-(2m+9)k+m+90-9mq^3=0$. Note that $b=\frac{vr}{k}=\frac{mq^3(q^3+1)}{k}$ is an integer. Then $k\mid m(q^3+1)$, and so we   conclude that $k\mid (90+10m)$. It can be verified that for the cases $m\leq8$, there are no integers $k$ that satisfy all the above conditions simultaneously. For example, if $m=1$, then $k\mid 100$. By the fact that $y=10$ divides $k$ and $\lambda=\frac{k-1}{9}$ is an integer, we have $k=100$. It follows that $9q^3=100^2-11\cdot100+91=8991$, which is impossible.  This completes the proof.
	\end{proof}
	
	\begin{lemma}\label{parabolic}
		Assume Hypothesis \ref{hy_exceptional}. The point stabilizer $H=G_{\alpha}$ is not a parabolic subgroup of $G$.
	\end{lemma}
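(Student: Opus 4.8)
The plan is to follow the divisibility-plus-inequality strategy of the ``Computational methods'' subsection. Since $H=G_\alpha$ is maximal and parabolic, $X_\alpha=H\cap X$ is a maximal parabolic subgroup of $X$ and $v=|X:X_\alpha|$, an index that is explicitly known for every exceptional socle from the references cited in the Remark after Lemma~\ref{max} together with \cite[Table~5.1.B]{Kleidman}. The Suzuki and Ree groups ${}^2B_2(q)$ and ${}^2G_2(q)$ possess a single class of maximal parabolic subgroups and have already been disposed of in Lemmas~\ref{suzuki} and~\ref{G2}; so it remains to treat the socles $G_2(q)$, ${}^3D_4(q)$, ${}^2F_4(q)$, $F_4(q)$, $E_6^{\pm}(q)$, $E_7(q)$ and $E_8(q)$, running through each of their maximal parabolic subgroups $X_\alpha$ in turn.

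For every socle except the untwisted $E_6(q)=E_6^{+}(q)$, Lemma~\ref{p-power} supplies a \emph{unique} subdegree $d$ that is a power of $p$. My first observation is that, by uniqueness, the corresponding $X_\alpha$-orbit is invariant under $G_\alpha$ (which normalises $X_\alpha$), so $d=p^{j}$ is in fact a subdegree of $G$. The second observation is that, reading off the explicit value of $v$, one has $(v-1)_p=q_s$, where $q_s\in\{q,q^{2},q^{3}\}$ is the parameter attached to the node of the relative Dynkin diagram deleted in $X_\alpha$ --- so $q_s=q$ for the untwisted socles. Lemma~\ref{s23}(iii) then forces $\tfrac{r}{(r,\lambda)}\mid(v-1,d)\mid q_s$, and Lemma~\ref{s22}(v) gives $v\le 2(y-1)\tfrac{r^{2}}{(r,\lambda)^{2}}\le 18q_s^{2}$. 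In every case the left-hand side is a polynomial in $q$ of degree far exceeding $\deg_q(18q_s^{2})\le 6$; the tightest instance is $G_2(q)$, where $v=(q^{6}-1)/(q-1)$ and $q_s=q$, so $v\le 18q^{2}$ forces $q=2$, which is excluded because $G_2(2)$ is not simple. Hence the inequality fails for all admissible $q$, a contradiction. (For ${}^3D_4(q)$ the parabolic with $q_s=q^{3}$ has $v=(q^{8}+q^{4}+1)(q^{3}+1)$, and $q^{11}\le 18q^{6}$ still fails for every $q\ge 2$; the other cases are even less tight.)

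For the untwisted $E_6(q)$, Lemma~\ref{p-power} is unavailable. Here I would instead use Lemma~\ref{s23}(ii) and Lemma~\ref{stabilizer}: since $|G_\alpha|$ divides $|\mathrm{Out}(X)|\cdot|X_\alpha|$, we get $\tfrac{r}{(r,\lambda)}\mid(v-1,|G_\alpha|)\mid\bigl(v-1,\,|X_\alpha|\cdot|\mathrm{Out}(X)|\bigr)$. I would bound this gcd exactly as in Example~\ref{ex1}: applying $\mathbf{XGCD}$ to $v-1$ and $|X_\alpha|$ as polynomials in $q$ yields $f(q)(v-1)+g(q)|X_\alpha|=h(q)$, and clearing denominators gives $(v-1,|X_\alpha|)\le N\,h(q)$ for an explicit constant $N$, so that $v\le 18\bigl(N\,h(q)\,|\mathrm{Out}(X)|\bigr)^{2}$. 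Since $h(q)$ has small degree compared with $v$, this leaves only finitely many small prime powers $q$; for each I would compute $a=(v-1,|X_\alpha|\cdot|\mathrm{Out}(X)|)$ directly, verify $v>18a^{2}$, and for any surviving $q$ run the parameter search of Section~\ref{Comp} (conditions \eqref{y}--\eqref{b}) to confirm that no admissible $(v,b,r,k,\lambda)$ exists.

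The hard part will be the untwisted $E_6(q)$: the clean bound ``$\tfrac{r}{(r,\lambda)}\mid q$'' is lost, and the argument rests on the $\mathbf{XGCD}$ output $h(q)$ having degree comfortably below $\tfrac12\deg_q v$ --- recall $\deg_q v=16$ for the two minuscule parabolics --- so that only a handful of small $q$ remain to be eliminated by hand. A secondary, purely clerical, difficulty is compiling the full list of maximal-parabolic indices for $E_7(q)$ and $E_8(q)$; once these are in hand, the inequalities $v\le 18q_s^{2}$ fail by enormous margins and no casework is needed.
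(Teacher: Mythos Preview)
Your strategy coincides with the paper's for every socle other than the untwisted $E_6(q)$: both use Lemma~\ref{p-power} to obtain a $p$-power subdegree and then force a contradiction via Lemma~\ref{s22}(v). Your observation that uniqueness of the $p$-power $X_\alpha$-orbit guarantees it is a $G_\alpha$-orbit is a point the paper leaves implicit, and your bound $(v-1)_p=q_s\in\{q,q^2,q^3\}$ is in fact more accurate than the paper's stated $(v-1)_p\le 2q$ (which is false for the ${}^3D_4(q)$ parabolic with $q_s=q^3$ and for one ${}^2F_4(q)$ parabolic, though the final contradiction survives either way).

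The genuine divergence is in your treatment of $E_6(q)$. You propose a uniform $\mathbf{XGCD}$ argument for all four parabolic types, whereas the paper handles them individually: for $P_1$ (type $D_5$) it exploits the rank~$3$ permutation action and uses the explicit subdegree $d_1=q(q^8-1)(q^3+1)/(q-1)$ directly in Lemma~\ref{s23}(iii); for $P_2,P_4$ it invokes \cite[p.~345]{Saxl2002} to recover a unique $p$-power subdegree when $G$ contains a graph automorphism; only $P_3$ (type $A_1A_4$) is dispatched by the $\mathbf{XGCD}$ method. Your concern about the degree of $h(q)$ for the minuscule parabolic $P_1$ is precisely why the paper takes a different route there: the rank~$3$ subdegrees give an immediate sharp bound without any polynomial-gcd computation. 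Your approach would still cover all cases in principle (and indeed handles $P_2,P_4$ without the graph-automorphism hypothesis, a case the paper leaves somewhat implicit), but you would need to actually run the $\mathbf{XGCD}$ for $P_1$ and verify that $\deg h(q)<8$; the paper's use of the explicit subdegree sidesteps that verification entirely.
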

	\begin{proof}
		Firstly, by Lemmas \ref{suzuki} and \ref{G2},  we know that $(X, X_{\alpha})\neq\left({ }^2B_2(q),[q^2]:(q-1)\right)$ or  $\left({ }^2G_2(q),[q^3]:(q-1)\right)$. We then assume that $X\neq E_6(q)$. By Lemma \ref{p-power}, there is a unique subdegree which is a power of $p$. Moreover, we have $(v-1)_p\leq 2q$  for all parabolic subgroups, with the equality holding only when $q=2^f$. Then by Lemma \ref{s23}(iii) and Lemma \ref{s22}(v), we have
		\begin{equation}
			v\leq2(y-1)\cdot \frac{r^2}{(r,\lambda)^2}\leq18(v-1,d)^2\leq 18\cdot 4q^2.
		\end{equation} 
		Similar to the final computation process in Lemma \ref{non_parabolic}, we can eliminate this case. Specifically, one can refer to the calculation in Example \ref{ex2} for details.
		
		Finally, we suppose that $X=E_6(q)$. If $G$ contains a graph automorphism
     	and $H=P_i$ with $i=2$ or $4$, then there is a unique subdegree which is a
		power of $p$ (see \cite[p.345]{Saxl2002}). Therefore, we can deal with these two cases in the same way as the proof in the previous paragraph. If $H\cap X$ is $P_1$ with type $D_5(q)$, then $v=(q^8+q^4+1)(q^9-1)/(q^8-1)$. Note that the right coset action of $G$ on $H$ is rank $3$ by \cite{Liebeck1986}. Two non-trivial subdegrees are $d_1:=q(q^8-1)(q^3+1)/(q-1)$ and $d_2:=q^8(q^5-1)(q^4+1)/(q-1)$, respectively. By Lemma \ref{s22}(iii) and Lemma \ref{s22}(v), we have $v\leq18\cdot (v-1,d_1)^2$, which is impossible by Magma \cite{Bosma}. If $H\cap X$ is $P_3$ with type $A_1(q)A_4(q)$, then $v=(q^3+1)(q^4+1)(q^9-1)(q^{12}-1)/(q-1)(q^2-1)$. This case can be ruled out similar to the proof of Example \ref{ex1} and we omit the details. This completes the proof.
		\end{proof}
	\noindent\textbf{Proof of Theorem \ref{main}.}  By Lemma \ref{s23}(i),  the point stabilizer $G_{\alpha}$ is maximal in $G$. Thus we can apply Lemma \ref{max} and analyze each possible case. In Lemmas \ref{non_parabolic} and \ref{A_2}, we excluded the case where $G_{\alpha}$ is a maximal non-parabolic subgroup of $G$. In Lemmas \ref{suzuki}, \ref{G2} and \ref{parabolic}, we handled the case where $G_{\alpha}$ is a parabolic subgroup of $G$. Putting together,  this completes the proof of Theorem \ref{main}.

	\vspace*{10pt}

	\begin{center}
		\scriptsize
		\setlength{\bibsep}{0.5ex}  
		\linespread{0.5}
		\bibliographystyle{plain}

	\end{center}
	
\end{document}